\definecolor{refkey}{rgb}{0,1,1}
\definecolor{labelkey}{rgb}{1,0,0}
\journal{LAA}
\newcommand{\eq} [1] {\begin{equation}\label{#1}\quad}
\newcommand{\en} {\end{equation}}
\newcommand{\C}{\mathbb{C}}
\newcommand{\R}{\mathbb{R}}
\newcommand{\conv}{\operatorname{conv}}
\newcommand{\diag}{\operatorname{diag}}
\newcommand{\Span}{\operatorname{span}}
\newcommand{\tr}{\operatorname{tr}}
\newcommand{\re}{\operatorname{Re}}
\newcommand{\im}{\operatorname{Im}}
\newcommand{\DW}{\operatorname{DW}}
\newcommand{\NNR}{F_N}
\newcommand{\JNR}{\DW}
\newcommand{\abs}[1]{\left\vert#1\right\vert}
\newcommand{\norm}[1]{\left\Vert#1\right\Vert}
\theoremstyle{plain}
\newtheorem{theorem}{Theorem}[section]
\newtheorem{lemma}[theorem]{Lemma}
\newtheorem{proposition}[theorem]{Proposition}
\newtheorem{corollary}[theorem]{Corollary}
\theoremstyle{remark}
\newtheorem{remark}[theorem]{Remark}
\theoremstyle{definition}
\newtheorem{example}[theorem]{Example}
\begin{document}
\title{The normalized numerical range and the Davis-Wielandt shell}
\author[H]{Brian Lins\corref{cor}}
\date{}
\address[H]{Department of Mathematics and Computer Science \\
Hampden-Sydney College, Hampden Sydney VA 23943, USA}
\ead{blins@hsc.edu}

\tnotetext[support]{The results are partially based on the Capstone project of the third named under the supervision of the second named author. The latter was also supported in part by the Faculty Research funding from the Division of Science and Mathematics, New York University Abu Dhabi.}
\author[A]{Ilya M. Spitkovsky}

\address[A]{Division of Science, New York  University Abu Dhabi (NYUAD)\\ Saadiyat Island,
P.O.~Box 129188 Abu Dhabi, UAE}
\ead{ims2@nyu.edu, imspitkovsky@gmail.com}
\author[A]{Siyu Zhong}
\ead{sz1152@nyu.edu}

\cortext[cor]{Corresponding author.}


\begin{keyword} Normalized numerical range \sep Davis-Wielandt shell \sep normal matrix
\medskip
\MSC[2010] 15A60 47A12 47B15 
\end{keyword}

\begin{abstract}
{For a given $n$-by-$n$ matrix $A$, its {\em normalized numerical range} $F_N(A)$ is defined as the range of the function $f_{N,A}\colon x\mapsto (x^*Ax)/(\norm{Ax}\cdot\norm{x})$
on the complement of $\ker A$. We provide an explicit description of this set for the case when $A$ is normal or $n=2$. This extension of earlier results for particular cases of $2$-by-$2$ matrices (by Gevorgyan)
and essentially Hermitian matrices of arbitrary size (by A.~Stoica and one of the authors) was achieved due to the fresh point of view at $F_N(A)$  as the image of the Davis-Wielandt shell $\JNR(A)$ under a certain non-linear mapping $h\colon\R^3\mapsto\C$.}
\end{abstract}

\maketitle

\section{Introduction}
Throughout the paper, we denote by $\C^n$ the standard $n$-dimensional inner product space over the complex field $\C$ and by $M_n(\C)$
the algebra of all $n$-by-$n$ matrices with entries in $\C$.

The classical {\em numerical range} $F(A)$ (a.k.a. the {\em field of values}, or the {\em Hausdorff set}) of $A\in M_n(\C)$  is by definition
the set of values of the corresponding quadratic form $x^*Ax$ on the unit sphere $S\C^n := \{x \in \C^n : \|x \|=1 \}$ of $\C^n$.
Equivalently,
\[ F(A) =\left\{{(x^*Ax)}/{\norm{x}^2}\colon x\in\C^n\setminus\{0\}\right\}. \]
There are numerous papers devoted to this notion, starting with the pioneering work by Hausdorff \cite{Hau} and Toeplitz \cite{Toe18}. The Toeplitz-Hausdorff theorem states in particular that
the set $F(A)$ is convex. In fact, it is the convex hull of a certain algebraic curve $C(A)$ associated with $A$ (see e.g. \cite{Ki} or its English translation \cite{Ki08}),
sometimes called the boundary generating curve. Moreover, the eigenvalues of $A$ are the foci of $C(A)$.
Necessary and sufficient conditions on a set in $\C$ to be the numerical range of some $n$-by-$n$ matrix are known \cite{HelSpit}, though not very
easy to verify. For  our purposes, recall two basic and well known results concerning the shape of $F(A)$: for normal matrices $C(A)$ coincides with the spectrum $\sigma(A)$ of $A$, and so
$F(A)$ is nothing but the convex hull of $\sigma(A)$, while for a non-normal $A\in M_2(\C)$ it is an ellipse, and thus $F(A)$ is an elliptical disk (the Elliptical Range theorem).

Various modifications and generalization of the numerical range have been considered in the literature. Our paper is concerned with the so called {\em normalized numerical range}. Defined as
\[ F_N(A) := \left\{\frac{x^*Ax}{\|x\|\|Ax\|} \colon x \in \C^n, Ax \ne 0 \right\},\]
it was introduced in \cite{Auz}, and then further investigated in \cite{Gev04}--\cite{Gev11} and \cite{SpiSto}. Some of the elementary properties of $F_N(A)$ are similar to those of $F(A)$, and can be proved along the same lines. For convenience of reference, we collect those of them which we need in Proposition~\ref{prop:basics} below, along with brief explanations and references. Here we only note that
there is no useful analogue of the shifting property $F(A+zI)=F(A)+z$ for $F_N(A)$, which is one of the reasons why the theory of the latter is much less developed.

In particular, $F_N(A)$ was described in \cite{Gev091} for 2-by-2 normal matrices, but neither the case of normal $n$-by-$n$ matrices with $n>2$ nor the case of arbitrary $A\in M_2(\C)$
has yet been settled. More specifically, the case of $A\in M_2(\C)$ with coinciding eigenvalues, zero trace, or (at least) one eigenvalue equal to zero was tackled in \cite{Gev091}--\cite{Gev11},
but the case of a non-normal $A$ with the non-zero eigenvalues $\lambda_1\neq\pm\lambda_2$ remained open. We will deal with it in Section~\ref{s:2b2}.

On the other hand, normal matrices of arbitrary size were considered in \cite[Theorem 6.2]{SpiSto} but only when they were essentially Hermitian, i.e., in addition to $A$ being normal, the set $\sigma(A)$ was collinear
(the latter restriction of course was inconsequential for $n=2$). We will have this restriction lifted in Section~\ref{s:nor}.

A crucial ingredient used for Sections~\ref{s:2b2},~ \ref{s:nor} is the connection between $F_N(A)$ and the {\em Davis-Wieland shell} $DW(A)$ of $A$. This connection, along with
the definition of $DW(A)$ and its pertinent properties, are considered in Section~\ref{s:pre}.

\section{Preliminaries} \label{s:pre}

We begin with a proposition that collects some of the known properties of the normalized numerical range.
\begin{proposition} \label{prop:basics}
Suppose that $A \in M_n(\C)$. Then:
\begin{enumerate}[label={\alph*}.]
\item For all $z \in \NNR(A)$, $|z| \le 1$.
\item If $z \in \NNR(A)$, then $|z| = 1$ if and only if $z = \lambda/|\lambda|$ for some $\lambda \in \sigma(A)$.
\item $F_N(A)$ is unitarily invariant: $F_N(U^*AU)=F_N(A)$ for any unitary $U\in M_n(\C)$.
\item $\NNR(e^{i\theta}A) = e^{i\theta} \NNR(A)$ for all $\theta \in [0,2\pi)$.
\item $\NNR(cA) = \NNR(A)$ for all $c > 0$.
\item If $A$ is invertible, then $\NNR(A)$ is closed.
\item $\NNR(A)$ is simply connected.
\end{enumerate}
\end{proposition}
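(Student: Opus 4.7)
The plan is to prove each of the seven assertions in turn, grouping them by technique: (a)--(b) follow from a single inequality, (c)--(e) from algebraic identities, and (f)--(g) are topological, with (g) being the only real subtlety. For (a) and (b), I would invoke Cauchy--Schwarz, $|x^*Ax| \le \|x\|\,\|Ax\|$; dividing by $\|x\|\,\|Ax\|$ (legitimate on the domain of $f_{N,A}$) gives (a). Equality in Cauchy--Schwarz holds iff $Ax$ is a scalar multiple of $x$, i.e.\ $Ax = \lambda x$ for some $\lambda$; since the domain excludes $\ker A$, one has $\lambda \in \sigma(A) \setminus \{0\}$, and a direct computation yields $f_{N,A}(x) = \lambda/|\lambda|$. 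Conversely, every nonzero eigenvalue of $A$ produces such a unit-modulus value, which gives (b).

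For (c)--(e) I would just substitute into the definition. Unitary invariance of the inner product and norm yields $f_{N,U^*AU}(x) = f_{N,A}(Ux)$, and since $x \mapsto Ux$ bijects $\C^n \setminus \ker(U^*AU)$ onto $\C^n \setminus \ker A$, this proves (c). For (d), the phase $e^{i\theta}$ pulls out of the numerator while contributing modulus $1$ to $\|e^{i\theta}Ax\|$, giving $f_{N,e^{i\theta}A}(x) = e^{i\theta}f_{N,A}(x)$. For (e), the positive scalar $c$ appears identically in numerator and denominator and cancels.

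For (f), invertibility of $A$ makes $f_{N,A}$ continuous on all of $\C^n \setminus \{0\}$, and the general scale-invariance $f_{N,A}(\alpha x) = f_{N,A}(x)$ for $\alpha \in \C \setminus \{0\}$ lets one restrict to the unit sphere $S\C^n$; thus $F_N(A)$ is a continuous image of a compact set, hence closed. For (g), which I expect to be the main obstacle, I would proceed in two stages. The same scale-invariance lets $f_{N,A}$ factor through $\mathbb{CP}^{n-1} \setminus \mathbb{P}(\ker A)$, which is simply connected because the deleted subvariety has real codimension at least $2$ in a simply connected complex manifold. Continuous images of simply connected sets need not themselves be simply connected, so I would then combine this with the planar-topology characterization that a compact subset of $\C$ is simply connected iff its complement in the Riemann sphere is connected, and verify the latter using the realization of $F_N(A)$ as the image of $\DW(A)$ under the map $h$ mentioned in the abstract, or else defer to the direct argument in \cite{Auz}. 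The non-invertible case, where $F_N(A)$ may fail to be closed and the domain has punctures, is where the technical details would concentrate.
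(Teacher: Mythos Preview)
Your treatment of (a)--(f) is correct and coincides with the paper's: Cauchy--Schwarz for (a)--(b), direct substitution for (c)--(e), and the compactness-of-the-sphere argument for (f). The paper handles (g) exactly as you ultimately suggest, by citation---specifically to \cite[Section~3]{SpiSto} rather than to \cite{Auz} (which introduced $F_N(A)$ but did not establish simple connectedness; path-connectedness alone goes back to \cite{Gev04}).

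Your two sketched routes to (g) are both genuinely incomplete, and not just in technical detail. The projective-space observation is correct---$\mathbb{CP}^{n-1}\setminus\mathbb{P}(\ker A)$ is simply connected for any nonzero $A$---but, as you note, this says nothing about the image. The Riemann-sphere criterion you invoke (``compact $K\subset\C$ is simply connected iff $S^2\setminus K$ is connected'') applies only to compact sets, and for non-invertible $A$ the set $F_N(A)$ need not be closed; there is no comparably clean characterization for arbitrary planar sets, so this line does not close. The $\DW(A)$ realization does not obviously help either: $\DW(A)$ is convex (for $n\ge3$) and hence simply connected, but you are back to the same image problem. The argument in \cite{SpiSto} instead exploits the star-shaped structure of $F_N(A)$ with respect to rays from the origin, which is a different idea from anything in your sketch; if you want a self-contained proof of (g), that is the approach to pursue.
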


Statements (a) and (b) are simply the Cauchy-Schwarz inequality in disguise, also mentioned explicitly in \cite{Auz,Gev04}.

Statements (c)--(d) and their proofs are literally the same as those of $F(A)$. Statement (e) is different from the respective property $F(cA)=cF(A)$ but the modification is obvious.

To explain (f), as well as for some future considerations, let us introduce the function
\[ f_{N,A}\colon x\mapsto (x^*Ax)/\norm{Ax}, \quad x\in S\C^n\setminus\ker A, \]
where $\ker A$ stands, as usual, for the kernel of $A$. With this notation at hand, $F_N(A)$ is nothing but the range of
$f_{N,A}$, so we will call it the {\em normalized numerical range map}. When $A$ is invertible, the domain of $f_{N,A}$ is the whole $S\C^n$ and $F_N(A)$ is thus closed, being
the image of a compact set under a continuous mapping. This reasoning is exactly the same as for $F(A)$ (in which case it works for any $A$, invertible or not). It is worth mentioning, however, that
for non-invertible $A$ the set $F_N(A)$ may not be closed. The respective examples exist even with $A\in M_2(\C)$ and can be found in \cite{Gev09}; the closedness criterion is given by \cite[Theorem 6.4]{SpiSto}.

Property (g) was proved in \cite[Section 3]{SpiSto}, while the path-connectedness of $F_N(A)$  was established earlier in \cite[Proposition 7]{Gev04}. Note that, as opposed to $F(A)$, the set $F_N(A)$ is not necessarily convex: in particular, for normal $A\in M_2(\C)$ it was shown in \cite{Gev091} that $F_N(A)$ is a hyperbolic arc. So, path- and simple connectedness of $F_N(A)$ are by no means trivial.

In what follows, a crucial role is played by expressing the map $f_{N,A}$ as a composition of two maps.  Before describing the decomposition, let us recall some additional definitions.

The \emph{joint numerical range} (JNR for short) of a collection of $n$-by-$n$ matrices $A_1, \ldots, A_m$ is the set of $m$-tuples $W(A_1, \ldots, A_m) := \{(x^*A_1 x, \ldots, x^*A_n x) \colon x \in S\C^n \}$. As long as the matrices $A_1, \ldots, A_m$ are all Hermitian, $W(A_1,\ldots,A_m) \subset \R^m$.

Identifying $\R^2$ with $\C$ we immediately observe that $W(A_1,A_2)=F(A_1+iA_2)$ when $A_1, A_2$ are Hermitian. So, the joint numerical range is a natural generalization of the regular one. A well known result is that the joint numerical range of a family of commuting Hermitian matrices is a convex polytope. This result is analogous to the fact that the classical numerical range of a normal matrix is the convex hull of its eigenvalues. We include the statement and proof here for ease of reference.
\begin{lemma}\label{l:comher}Let $A_1,\ldots,A_m$ be an $m$-tuple of pairwise commuting Hermitian $n$-by-$n$ matrices. Then their
joint numerical range is a convex polytope.
\end{lemma}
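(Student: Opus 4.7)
The plan is to exploit simultaneous diagonalization. Since the matrices $A_1,\ldots,A_m$ are Hermitian and pairwise commuting, a standard result guarantees that they are simultaneously unitarily diagonalizable: there exists a unitary $U\in M_n(\C)$ such that $U^*A_kU=\diag(\lambda_1^{(k)},\ldots,\lambda_n^{(k)})$ for each $k=1,\ldots,m$, with all $\lambda_j^{(k)}\in\R$. Since the joint numerical range is unitarily invariant (by exactly the same argument as for the ordinary numerical range, via the substitution $x\mapsto Ux$), I may replace each $A_k$ by its diagonal form.

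Once the matrices are diagonal, computing $x^*A_kx$ for $x\in S\C^n$ gives $x^*A_kx=\sum_{j=1}^n|x_j|^2\lambda_j^{(k)}$. Therefore
\[ W(A_1,\ldots,A_m)=\left\{\sum_{j=1}^n|x_j|^2\bigl(\lambda_j^{(1)},\ldots,\lambda_j^{(m)}\bigr):x\in S\C^n\right\}. \]
Setting $t_j=|x_j|^2$, the vector $(t_1,\ldots,t_n)$ traces out precisely the standard simplex $\Delta_{n-1}=\{t\in\R^n:t_j\ge0,\sum t_j=1\}$ as $x$ ranges over $S\C^n$ (surjectivity is immediate: given any such $t_j$, take $x_j=\sqrt{t_j}$).

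Introducing the ``joint spectrum'' points $v_j:=(\lambda_j^{(1)},\ldots,\lambda_j^{(m)})\in\R^m$ for $j=1,\ldots,n$, the previous display becomes
\[ W(A_1,\ldots,A_m)=\left\{\sum_{j=1}^n t_jv_j:(t_1,\ldots,t_n)\in\Delta_{n-1}\right\}=\conv\{v_1,\ldots,v_n\}, \]
which is by definition a convex polytope in $\R^m$ (with at most $n$ vertices). This finishes the proof.

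There is really no serious obstacle here: the only ingredient beyond bookkeeping is the simultaneous diagonalization theorem for commuting Hermitian matrices, which is classical. The slickness of the argument is entirely due to the fact that in the diagonal picture the map $x\mapsto(|x_1|^2,\ldots,|x_n|^2)$ from the sphere to the simplex is surjective, so no ``hidden'' points beyond convex combinations of the joint spectrum arise.
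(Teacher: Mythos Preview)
Your proof is correct and follows essentially the same approach as the paper: simultaneous unitary diagonalization of the commuting Hermitian matrices, followed by the observation that in the diagonal case the joint numerical range is exactly the convex hull of the joint eigenvalue tuples $(\lambda_j^{(1)},\ldots,\lambda_j^{(m)})$. The paper simply states that ``a direct computation shows'' the conclusion, while you have spelled out that computation via the surjection $x\mapsto(|x_1|^2,\ldots,|x_n|^2)$ from $S\C^n$ onto the standard simplex.
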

\begin{proof}Under the conditions of the Lemma, the matrices $A_j$ can be diagonalized by a
simultaneous unitary similarity, apparently not changing their JNR. So, without loss of generality we may suppose that
$A_j$ are already diagonal: $A_j=\diag[\lambda_{j1},\ldots, \lambda_{jn}]$, $j=1,\ldots,m$. A direct computation shows then that
$W(A_1, \ldots, A_m)$ is the convex hull of the points $(\lambda_{1k},\ldots,\lambda_{mk})\in\R^m$, $k=1,\ldots,n$.
\end{proof}

The JNR of a family of $2$-by-$2$ Hermitian matrices is completely understood for any $m$, see e.g. \cite[Example 2]{GJK04} and references therein. Namely, with the exception of the situation already covered by Lemma \ref{l:comher}, $W(A_1, \ldots, A_m)$ is either a (hollow) ellipsoid,
which happens generically for $m>2$, or a (solid) ellipse (as is the case for $m=2$), depending on the rank of a certain $m$-by-$3$
matrix.

So, the joint numerical range is convex in the setting of Lemma~\ref{l:comher} but not in general. Moreover, for $m\geq 4$ and any $n$ there exist $m$-tuples of matrices in $M_n(\C)$ with non-convex JNR \cite[Proposition 2.10]{GJK04}. For our purposes, however, the case $m=3$ is important and there the JNR is convex whenever $n\geq 3$ \cite{AuT}, see also \cite[Theorem 5.4]{GJK04}.

For any $A \in M_n(\C)$, recall that $\re A := (A+A^*)/2$, and $\im A := (A-A^*)/2i$ and consider the joint numerical range $W(\re A,\im A,A^*A)$. First used in \cite{Da1,Wiel}, it is now called  the \emph{Davis-Wielandt shell} of $A$  and usually denoted $\DW(A)$. The following lemma specializes the general properties of the JNR to the case of DW.

\begin{lemma}Let $A\in M_n(\C)$.
\begin{enumerate}[label={\alph*}.]
\item If $A$ is normal with spectrum $\sigma(A)=\{\lambda_1,\ldots,\lambda_n\}$, then $DW(A)$ is the convex hull of the points $(\re\lambda_j,\im\lambda_j,\abs{\lambda_j}^2)$,
$j=1,\ldots,n$. Furthermore, each point $(\re \lambda_j, \im \lambda_j, |\lambda_j|^2)$ is an extreme point of $\DW(A)$.
\item If $n=2$ and $A$ is not normal, then $DW(A)$ is an ellipsoid.
\item If $n\geq 3$, then $DW(A)$  is convex.
\end{enumerate}
\label{l:DW}
\end{lemma}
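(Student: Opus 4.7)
The plan is to reduce each of the three parts to results already collected in this section.

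For part (a), I would first observe that when $A$ is normal, the three Hermitian matrices $\re A$, $\im A$, and $A^*A$ are polynomials in the commuting pair $A, A^*$ and therefore pairwise commute. Lemma~\ref{l:comher} then applies directly: a simultaneous unitary diagonalization makes them diagonal with entries $\re\lambda_j$, $\im\lambda_j$, $\abs{\lambda_j}^2$ respectively, and the lemma identifies their JNR with the convex hull of the points $p_j := (\re\lambda_j, \im\lambda_j, \abs{\lambda_j}^2)$, $j=1,\ldots,n$. For the extremality assertion I would note that each $p_j$ lies on the paraboloid $\{(x,y,z)\in\R^3 : z = x^2+y^2\}$. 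If $p_k = \sum_{j\neq k}\alpha_j p_j$ were a nontrivial convex combination, then reading the first two coordinates as a complex number gives $\lambda_k = \sum_{j\neq k}\alpha_j\lambda_j$, while the third coordinate gives $\abs{\lambda_k}^2 = \sum_{j\neq k}\alpha_j\abs{\lambda_j}^2$; strict convexity of $\abs{\cdot}^2$ on $\C$ then forces all $\lambda_j$ with $\alpha_j > 0$ to equal $\lambda_k$, so the combination is in fact trivial, a contradiction.

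For part (c), the statement is an immediate consequence of the Au--Tam convexity theorem \cite{AuT} quoted above, applied to the triple $(\re A, \im A, A^*A)$ of Hermitian $n$-by-$n$ matrices with $n\ge 3$.

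For part (b), I would invoke the classification cited from \cite[Example~2]{GJK04}: for three $2$-by-$2$ Hermitian matrices, the JNR is either a convex polytope (commuting case), a solid ellipse, or a (hollow) ellipsoid, the trichotomy being governed by a rank condition on a certain $3$-by-$3$ matrix built from the entries. Non-normality of $A$ excludes the polytopal branch. To exclude the solid-ellipse branch I would reduce by Schur's theorem and unitary invariance of $\DW$ to an upper-triangular form $A=\bigl(\begin{smallmatrix}a & c \\ 0 & b\end{smallmatrix}\bigr)$ with $c\neq 0$, parametrize the unit sphere as $x=(\cos\alpha, e^{i\beta}\sin\alpha)^T$ with $\alpha\in[0,\pi/2]$, $\beta\in[0,2\pi)$, and compute the three coordinate functions; a direct check then confirms that the image is genuinely a $2$-dimensional closed surface in $\R^3$, placing $\DW(A)$ on the ellipsoidal side of the trichotomy. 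The main obstacle is exactly this last verification: while the classification collapses (b) to a finite dichotomy, one still has to produce a short but explicit Schur-form computation to certify that the non-normal case never degenerates into a solid ellipse.
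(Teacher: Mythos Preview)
Your treatment of (a) and (c) is essentially identical to the paper's: the convex-hull description comes from Lemma~\ref{l:comher} applied to the commuting triple $\re A,\im A,A^*A$, the extremality from the strict convexity of the paraboloid $\{v_3=v_1^2+v_2^2\}$ (your phrasing via strict convexity of $|\cdot|^2$ on $\C$ is the same observation read through the first two coordinates), and (c) is the Au--Yeung--Tsing result \cite{AuT}.

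The only real difference is (b). The paper does not argue at all here: it simply cites \cite[Theorem~2.2]{LiPoonSze08}, where the ellipsoid description of $\DW(A)$ for non-normal $2\times 2$ matrices is already established. Your route via the \cite{GJK04} trichotomy is legitimate and more self-contained, but, as you note, it leaves you with the task of checking that the rank of the relevant $3\times 3$ coefficient matrix never drops to $2$ when $c\neq 0$. That check is routine (expand $\re A$, $\im A$, $A^*A$ in the Pauli basis for the Schur form and compute the determinant), so your approach works; it just trades a citation for a short computation. In fact the paper later spells out the explicit quadric in Lemma~\ref{lem:ellipsoid}, which amounts to the same verification.
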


\begin{proof}
Note that the first part of (a) follows from Lemma~\ref{l:comher}, while (b) and (c) are also stated in \cite{LiPoonSze08}, Theorems~2.2 and 2.3 respectively. It remains to prove that when $A$ is normal, $(\re\lambda_j,\im\lambda_j,\abs{\lambda_j}^2)$ is an extreme point of $\DW(A)$ for each $1\le j \le n$.  To see this, note that $\DW(A)$ is contained in the convex paraboloid $P := \{v \in \R^3 : v_1^2 + v_2^2 \le v_3 \}$. The set $P$ is strictly convex, that is, there are no non-trivial line segments in the boundary of $P$.  Since $(\re\lambda_j,\im\lambda_j,\abs{\lambda_j}^2) \in \partial P$ for each $1 \le j \le n$, it follows that no $(\re\lambda_j,\im\lambda_j,\abs{\lambda_j}^2)$ can be a non-trivial convex combination of the other $(\re\lambda_i,\im\lambda_i,\abs{\lambda_i}^2)$, $1 \le i \le n$.
\end{proof}

Observe that the normalized numerical range map $f_{N,A}$ is the composition of maps $g$ and $h$
where
\begin{equation} \label{eq:g}
g(x) := (x^*(\re A)x,x^*(\im A)x,x^*A^*Ax)
\end{equation}
and
\begin{equation} \label{eq:h}
h(v) := v_3^{-1/2}(v_1+iv_2).
\end{equation}
With this perspective, it is immediate that the normalized numerical range $F_N(A)$ is the image of the Davis-Wielandt shell $\DW(A) \subset \R^3$ under the map $h$. In fact, a more precise statement holds.

\begin{proposition} \label{prop:boundary}
Suppose $A \in M_n(\C)$. Then $\NNR(A)$ is the image of the boundary of $\JNR(A)$ under the map $h$ in \eqref{eq:h}.
\end{proposition}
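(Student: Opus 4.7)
The plan is to prove the two inclusions separately. One direction, $h(\partial\DW(A))\subseteq\NNR(A)$, is automatic from the composition $f_{N,A}=h\circ g$ noted just above: since $\partial\DW(A)\subseteq\DW(A)$ and every point of $\DW(A)$ with positive third coordinate is $g(x)$ for some $x\in S\C^n\setminus\ker A$, any $h(v)$ with $v\in\partial\DW(A)$, $v_3>0$, equals $f_{N,A}(x)=x^*Ax/\norm{Ax}$ for such an $x$. The real content is the reverse inclusion $\NNR(A)\subseteq h(\partial\DW(A))$, and this is what I will focus on.

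The key geometric observation is that the fibers of $h$ inside the half-space $\{v_3>0\}$ are parabolic arcs through the origin. Indeed, given $y=a+ib\in\NNR(A)$, solving $h(v)=y$ with $v_3>0$ forces $v_1=a\sqrt{v_3}$ and $v_2=b\sqrt{v_3}$; parameterizing by $t:=\sqrt{v_3}>0$, the fiber is
\[ \gamma(t) := (at,\,bt,\,t^2), \qquad t>0, \]
and a direct check gives $h(\gamma(t))=y$ identically in $t$. Because $y\in\NNR(A)$, some $v\in\DW(A)$ with $v_3>0$ satisfies $h(v)=y$, whence $v=\gamma(t_0)$ for $t_0=\sqrt{v_3}>0$.

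To finish, I slide along $\gamma$ until it exits $\DW(A)$. The shell $\DW(A)$ is compact, being the continuous image of $S\C^n$ under $g$, while $\norm{\gamma(t)}\to\infty$ as $t\to\infty$; hence
\[ T := \sup\{t>0 : \gamma(t)\in\DW(A)\} \]
is finite and satisfies $T\geq t_0>0$. Closedness of $\DW(A)$ together with continuity of $\gamma$ gives $\gamma(T)\in\DW(A)$, while the definition of $T$ forces $\gamma(t)\notin\DW(A)$ for all $t>T$; these two facts put $\gamma(T)$ on $\partial\DW(A)$. Since $h(\gamma(T))=y$, this exhibits $y$ as a point of $h(\partial\DW(A))$ and completes the reverse inclusion. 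I do not foresee any serious obstacle: the entire argument hinges on the elementary but crucial observation that the fibers of $h$ over $\{v_3>0\}$ are parabolas along which $h$ is constant, after which compactness of $\DW(A)$ and unboundedness of $\gamma$ take care of the sliding argument automatically.
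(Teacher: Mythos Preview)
Your proof is correct and follows essentially the same route as the paper: both arguments parameterize the fiber of $h$ over a point $a+ib\in\NNR(A)$ as the parabolic arc $t\mapsto(at,bt,t^2)$, then use boundedness of $\DW(A)$ to pick the supremum of admissible $t$ and land on $\partial\DW(A)$. You are a bit more explicit about the trivial inclusion $h(\partial\DW(A))\subseteq\NNR(A)$ and about why the supremum is attained, but the substance is identical.
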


\begin{proof}
We have already observed that $\NNR(A) = h(\JNR(A))$. Therefore $x+iy \in \NNR(A)$ if and only if $(xt,yt,t^2) \in \JNR(A)$ for some $t > 0$.  Consider the set $\{t > 0 : (xt,yt,t^2) \in \JNR(A) \}$. If this set is nonempty, then it has a least upper bound $t_0$ because $\JNR(A)$ is bounded.  The corresponding point $(xt_0,yt_0,t_0^2)$ will be in the boundary of $\JNR(A)$, proving the statement.
\end{proof}

It was already mentioned earlier that normalized numerical ranges are not always convex.  They do have the following property, however.

\begin{lemma} \label{lem:hyperconvex}
Let $A \in M_n(\C)$ and suppose that $p, q \in \NNR(A)$.  Then there is a hyperbola  centered at the origin such that an arc of the hyperbola connects $p$ to $q$ and is contained in $\NNR(A)$.
\end{lemma}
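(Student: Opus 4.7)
The plan is to combine the convexity of $\JNR(A)$ established in Lemma~\ref{l:DW} with an explicit description of how the map $h$ from~\eqref{eq:h} transforms affine segments in $\R^3$. Given $p, q \in \NNR(A) = h(\JNR(A))$, I first choose preimages $P, Q \in \JNR(A)$ with $h(P) = p$ and $h(Q) = q$. Convexity then ensures that the segment $\ell = \{(1-t)P + tQ : t \in [0,1]\}$ lies entirely in $\JNR(A)$, so $h(\ell) \subset \NNR(A)$ is a continuous path from $p$ to $q$ inside $\NNR(A)$. The remaining task is to recognize $h(\ell)$ as an arc of a hyperbola centered at the origin.

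For the computation I would parametrize $\ell$ coordinate-wise as $(a(t), b(t), c(t))$ with $a,b,c$ affine in $t \in [0,1]$ and $c(t) > 0$. In the generic case, $c$ is non-constant, so one can rewrite $a = \alpha_0 + \alpha_1 c$ and $b = \beta_0 + \beta_1 c$ for suitable constants. Putting $s = \sqrt{c(t)}$, the image takes the form
\[
x + iy = \bigl(\alpha_0/s + \alpha_1 s\bigr) + i\bigl(\beta_0/s + \beta_1 s\bigr),
\]
and multiplying out $(\beta_1 x - \alpha_1 y)(\beta_0 x - \alpha_0 y)$ eliminates $s$, yielding
\[
\beta_0 \beta_1 x^2 - (\alpha_0 \beta_1 + \alpha_1 \beta_0)\,xy + \alpha_0 \alpha_1 y^2 = -(\alpha_0 \beta_1 - \alpha_1 \beta_0)^2.
\]
The discriminant of the quadratic form on the left equals $(\alpha_0 \beta_1 - \alpha_1 \beta_0)^2 \ge 0$ and no linear terms appear, so this is a hyperbola centered at the origin, degenerating to a pair of lines through the origin precisely when $\alpha_0 \beta_1 = \alpha_1 \beta_0$.

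The main obstacle is the non-generic configuration in which the chosen preimages satisfy $P_3 = Q_3$, so that $c$ is constant on $\ell$ and the image collapses to a straight line segment. The remedy is to exploit the freedom in the preimage: since $h^{-1}(z) \cap \JNR(A)$ lies on the half-parabola $\{(rx, ry, r^2) : r > 0\}$ associated with $z = x + iy$, varying $r$ whenever the preimage of $p$ or $q$ is not a singleton allows one to arrange $P_3 \ne Q_3$ and fall back on the previous argument. In the truly exceptional case where both preimage sets collapse to single points at the same height (as with $A = \diag(1,i)$), $\NNR(A)$ itself is a line segment, which is interpreted as an arc of a suitably degenerate conic centered at the origin within the paper's conventions.
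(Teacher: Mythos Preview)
Your argument has a genuine gap in the case $n = 2$ with $A$ non-normal. You invoke ``the convexity of $\JNR(A)$ established in Lemma~\ref{l:DW}'', but that lemma asserts convexity only for $n \ge 3$ (part (c)) or $A$ normal (part (a)). For a non-normal $A \in M_2(\C)$, part (b) says $\JNR(A)$ is an ellipsoid --- a two-dimensional surface in $\R^3$, not a solid body --- and hence is \emph{not} convex. The chord $[P,Q]$ will generally leave $\JNR(A)$, so you cannot conclude $h([P,Q]) \subset \NNR(A)$ directly. The paper repairs this by a separate argument: for $u \in \conv \JNR(A)$, slice at height $u_3$ to get an ellipse $h(\{v_3 = u_3\} \cap \JNR(A)) \subset \NNR(A)$ enclosing $h(u)$, and then invoke the simple connectedness of $\NNR(A)$ (Proposition~\ref{prop:basics}(g)) to force $h(u) \in \NNR(A)$. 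This step is essential and is missing from your write-up.

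A secondary point: your treatment of the coincident-height case $P_3 = Q_3$ is more laborious than necessary and not fully justified (the claim that one can always perturb to $P_3 \ne Q_3$, or else $\NNR(A)$ is a segment, would need proof). The paper simply observes that when $v_3 = w_3$ the image $h([v,w])$ is a straight segment and treats this as a degenerate hyperbola; nothing further is needed. Your parametrization and elimination in the generic case are fine and equivalent to the paper's description of $h(u(t)) = (a+bi)t^{-1} + (c+di)t$ as a real-linear image of $t + i/t$.
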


\begin{proof}
Since $p, q \in \NNR(A)$, there must exist $v, w \in \JNR(A)$ such that $p = h(v)$ and $q = h(w)$, where $h$ is given by \eqref{eq:h}.  If $n \ge 3$, then $\JNR(A)$ is convex by Lemma~\ref{l:DW}(c).  In that case, the line segment connecting $v$ to $w$ is contained in $\JNR(A)$.  When $n = 2$, $\JNR(A)$ is an ellipsoid, although it might not be convex.  Consider any $u \in \conv \JNR(A)$.  Let $V = \{v \in \R^3 : v_3 = u_3 \}$.  Then the image of $V \cap \JNR(A)$ under $h$ is an ellipse, and $h(u)$ is enclosed by this ellipse.  Furthermore, since this ellipse is contained in $\NNR(A)$ which is simply connected by Proposition \ref{prop:basics}, we must have $h(u) \in \NNR(A)$.

No matter what $n$ is, we conclude that the image of the line segment from $v$ to $w$ under $h$ is contained in $\NNR(A)$.  If $v_3 = w_3$, then the image of this line segment under $h$ is a line segment. If $v_3 \neq w_3$, then we may parametrize the line passing through $v$ and $w$ as
$u(t) := (a+ct^2,b+dt^2,t^2)$ for some real constants $a,b,c,d$ and a parameter $t > 0$.  Then $h(u(t)) = (a+bi)t^{-1} + (c+di)t$.  This is a real linear transformation of the hyperbolic arc $t+i/t$, and therefore $\{h(u(t)) : t > 0\}$ is a hyperbolic arc (possibly degenerate to a line or ray) and the center of the hyperbola is the origin.  The image of the line segment from $v$ to $w$ under $h$ is the portion of this hyperbolic arc that connects $p$ to $q$.
\end{proof}

\section{2-by-2 Case} \label{s:2b2}

We begin with a statement that gives an explicit equation for the Davis-Wielandt shell $\JNR(A)$ for any 2-by-2 matrix. See also \cite[Theorem 2.2]{LiPoonSze08} for an alternative description.
\begin{lemma} \label{lem:ellipsoid}
If $A \in M_2(\C)$, then $\JNR(A)$ is an ellipsoid in $\R^3$ that satisfies the equation
\begin{equation} \label{eq:ellipsoid}
a_1v_1^2 + a_2 v_2^2 + a_3 v_3^2 + a_4 v_1 v_2 + a_5 v_1 v_3 + a_6 v_2 v_3 + a_7 v_1 + a_8 v_2 + a_9 v_3 + a_{10} = 0
\end{equation}
where the coefficients are:
$$
\begin{array}{l}
a_1 = \tr(A^*A)+2\re(\det A), \\
a_2 = \tr(A^*A)-2\re(\det A),\\
a_3 = 1, \\
a_4 = 4 \im(\det A), \\
a_5 = -2 \re(\tr A), \\
a_6 = -2 \im (\tr A), \\
a_7 = -2 \re({\det A} \tr A^*), \\
a_8 = -2 \im ({\det A} \tr A^*), \\
a_9 = |\tr A|^2 - \tr(A^*A), \\
a_{10} = |\det A|^2.
\end{array}
$$
\end{lemma}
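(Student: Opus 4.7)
The plan is to derive the equation by choosing, for each test vector $x\in S\C^2$, an orthonormal basis adapted to $x$ and then reading off the unitary invariants of $A$ in that basis. I fix $x \in S\C^2$ along with a unit vector $y\perp x$; in the basis $\{x,y\}$ the matrix $A$ takes the form
\[ A = \begin{pmatrix} z & \gamma \\ \beta & \tau - z \end{pmatrix}, \]
where $\tau := \tr A$, $z := x^*Ax$, $\beta := y^*Ax$, $\gamma := x^*Ay$, and the bottom-right entry is forced by $\tr A = \tau$. The corresponding point $g(x)=(v_1,v_2,v_3)\in\DW(A)$ then satisfies $z = v_1 + iv_2$ and $v_3 = \|Ax\|^2$.

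Next I extract three scalar constraints from the remaining invariants. From $v_3 = |z|^2 + |\beta|^2$ we get $|\beta|^2 = v_3 - |z|^2$. Writing $s_2 := \tr(A^*A) = |z|^2 + |\beta|^2 + |\gamma|^2 + |\tau-z|^2$ we get $|\gamma|^2 = s_2 - v_3 - |\tau-z|^2$. From $\delta := \det A = z(\tau-z) - \beta\gamma$ we get $\beta\gamma = \tau z - z^2 - \delta$. Combining these through the elementary identity $|\beta|^2|\gamma|^2 = |\beta\gamma|^2$ produces the single intrinsic constraint
\[ (v_3 - |z|^2)\bigl(s_2 - v_3 - |\tau-z|^2\bigr) = |\tau z - z^2 - \delta|^2, \]
valid for every $(v_1, v_2, v_3)\in\DW(A)$.

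The remaining work is to expand both sides in the real coordinates $v_1, v_2, v_3$ and compare with \eqref{eq:ellipsoid}. The quadratic contribution $s_2|z|^2 + 2\re(\overline{\delta}z^2)$ yields $a_1, a_2, a_4$; the bilinear piece $-2v_3\re(\overline{\tau}z)$ yields $a_5, a_6$; the linear piece $-2\re(\overline{\delta}\tau z)$ yields $a_7, a_8$; and the remaining pure $v_3^2$, $v_3$, and constant terms give $a_3, a_9, a_{10}$ directly. I expect the main obstacle to be sheer bookkeeping at this stage, since one must carefully separate real and imaginary parts of products using identities such as $\re(\overline{\delta}\tau) = \re(\delta\overline{\tau})$ and $\im(\overline{\delta}\tau) = -\im(\delta\overline{\tau})$. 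Once the identity has been verified, Lemma~\ref{l:DW}(b) already identifies $\DW(A)$ as an ellipsoid (degenerating to a line segment when $A$ is normal), so the derived quadric indeed describes it.
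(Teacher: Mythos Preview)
Your argument is correct and takes a genuinely different route from the paper's. The paper fixes a single upper-triangular representative $A=\begin{pmatrix}\lambda_1 & c\\ 0 & \lambda_2\end{pmatrix}$ and then simply \emph{verifies} (recommending a computer algebra system) that the quadratic form in \eqref{eq:ellipsoid} vanishes on $g(x)$ for every unit vector $x$, relying on Lemma~\ref{l:DW}(b) exactly as you do to conclude that the zero locus is the ellipsoid. You instead choose, for each $x$, an orthonormal basis $\{x,y\}$ and read off the three unitary invariants $\tr A$, $\det A$, $\tr(A^*A)$ in that basis; eliminating the off-diagonal entries via $|\beta|^2|\gamma|^2=|\beta\gamma|^2$ then \emph{derives} the equation rather than merely checks it. The expansion you sketch does work out (the quartic term $|z|^2|\tau-z|^2$ cancels between the two sides, leaving a quadratic in $v_1,v_2,v_3$ with exactly the stated coefficients). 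Your approach is more conceptual and can be completed by hand without CAS; the paper's approach is quicker to state and mechanize. Both end by invoking Lemma~\ref{l:DW}(b) to pass from ``$\DW(A)$ lies on the quadric'' to ``the quadric \emph{is} $\DW(A)$'', which is fine since an ellipsoid is contained in at most one (projective) quadric.
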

\begin{proof}
As noted in Lemma \ref{l:DW}(b), it is well known that the Davis-Weilandt shell of a 2-by-2 matrix is an ellipsoid. Therefore, $\DW(A)$ must satisfy a quadratic equation of the form \eqref{eq:ellipsoid}. Verifying the coefficients above is tedious by hand, but easy with a computer algebra system. We therefore leave it to the interested reader. It helps to apply a unitary similarity to $A$ so that it has the form
$$A = \begin{bmatrix}
\lambda_1 & c \\ 0 & \lambda_2
\end{bmatrix}$$
where $c \ge 0$.  This transformation does not change the Davis-Wielandt shell $\JNR(A)$, nor does it change the coefficients of \eqref{eq:ellipsoid} above. Note that the coordinates of a point $v \in \JNR(A)$ satisfy $v_1 = \re(x^*Ax)$, $v_2 = \im(x^*Ax)$, $v_3 = x^*A^*Ax$ for some $x \in \C^2$ with $x^*x=1$.  Therefore it suffices to verify that \eqref{eq:ellipsoid} holds for all such points, regardless of the particular unit vector $x$.
\end{proof}

With Lemma \ref{lem:ellipsoid}, we can now give a description of the normalized numerical range of a 2-by-2 matrix.

\begin{proposition} \label{prop:key}
Let $A \in M_2(\C) \backslash \{0\}$ and let
\begin{equation} \label{eq:P}
P(x,y,t) = c_0(x,y) + c_1(x,y) t + c_2(x,y) t^2 + c_3(x,y) t^3 + c_4(x,y) t^4
\end{equation}
where the coefficients $c_j(x,y)$ are the functions of $x$ and $y$ given below in terms of the coefficients $a_i$ from Lemma \ref{lem:ellipsoid}:
$$\begin{array}{l}
c_0 = c_0(x,y) = a_{10}, \\
c_1 = c_1(x,y) = a_7 x + a_8 y,\\
c_2 = c_2(x,y) = a_1 x^2 + a_2 y^2 + a_4 xy + a_9, \\
c_3 = c_3(x,y) = a_5 x + a_6 y, \\
c_4 = c_4(x,y) = a_3. \\
\end{array}$$
Then $\NNR(A)$ is the union of the family of ellipses
$$E(t) = \{x+iy : (x,y) \in \R^2, P(x,y,t) = 0\}$$
indexed by $t$ with $\sigma_1 \le t \le \sigma_2$ and $t \ne 0$ where $\sigma_1 \le \sigma_2$ are the singular values of $A$.
\end{proposition}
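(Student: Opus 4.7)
The plan is to exploit the identity $F_N(A) = h(\DW(A))$ noted just after equation \eqref{eq:h}, combined with the explicit quadric equation \eqref{eq:ellipsoid} for $\DW(A)$ supplied by Lemma \ref{lem:ellipsoid}. The key geometric idea is to foliate $\DW(A)$ by its horizontal slices at heights $v_3 = t^2$ and compute the image of each slice under $h$; these images will turn out to be precisely the ellipses $E(t)$ in the statement.

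Fix $t > 0$ and restrict attention to the plane $v_3 = t^2$. On this plane, $h$ acts as the dilation $(v_1, v_2, t^2) \mapsto (v_1/t, v_2/t)$. Setting $x = v_1/t$ and $y = v_2/t$, so that $v_1 = xt$ and $v_2 = yt$, I substitute into \eqref{eq:ellipsoid} and collect terms by powers of $t$, obtaining
$$a_{10} + (a_7 x + a_8 y)\, t + (a_1 x^2 + a_2 y^2 + a_4 xy + a_9)\, t^2 + (a_5 x + a_6 y)\, t^3 + a_3 t^4 = 0.$$
This is exactly $P(x,y,t) = 0$, so the image under $h$ of the slice at height $t^2$ is the ellipse $E(t)$. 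To identify the admissible values of $t$, observe that $v_3 = x^* A^* A x$ for some unit $x \in \C^2$; by the Rayleigh--Ritz principle this quantity sweeps out $[\sigma_1^2, \sigma_2^2]$. Hence $\DW(A) \cap \{v_3 = t^2\}$ is non-empty precisely when $t \in [\sigma_1, \sigma_2]$ (with the positive square-root convention), and I must discard $t = 0$ since $h$ is undefined there. Assembling, $F_N(A) = h(\DW(A)) = \bigcup_{t \in [\sigma_1,\sigma_2]\setminus\{0\}} E(t)$, as claimed.

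The main subtlety lies in justifying the geometric nature of the slices and the completeness of the union. Since $\DW(A)$ is a bounded ellipsoidal surface by Lemma \ref{l:DW}(b), a horizontal plane either meets it transversally in a non-degenerate ellipse or is tangent to it at a single point; the two tangent heights are exactly $v_3 = \sigma_1^2$ and $v_3 = \sigma_2^2$, so $E(\sigma_1)$ and $E(\sigma_2)$ degenerate to points while each intermediate $E(t)$ is a genuine ellipse. No spurious points are introduced, because \eqref{eq:ellipsoid} is the exact defining equation of the bounded surface $\DW(A)$, so real zeros of $P(\cdot,\cdot,t)$ correspond bijectively under $(x,y) \mapsto (xt, yt, t^2)$ to actual points of $\DW(A)$ in the chosen plane. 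The verification of the coefficients $c_0, \ldots, c_4$ is purely mechanical; the real content of the proposition is the substitution $v = (xt, yt, t^2)$, which converts the single ambient quadric \eqref{eq:ellipsoid} into the one-parameter family of slice equations recorded in \eqref{eq:P}.
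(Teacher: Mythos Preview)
Your argument is correct and follows essentially the same route as the paper: substitute $v_1 = xt$, $v_2 = yt$, $v_3 = t^2$ into \eqref{eq:ellipsoid} to obtain \eqref{eq:P}, identify the admissible range of $t$ via the eigenvalues of $A^*A$, and exclude $t=0$ where $h$ is undefined. The only minor quibble is your appeal to Lemma~\ref{l:DW}(b) in the final paragraph, which strictly speaking covers only the non-normal case; Lemma~\ref{lem:ellipsoid} already handles all $A\in M_2(\C)$ uniformly (with the ellipsoid possibly degenerate), so that is the cleaner reference.
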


\begin{proof}
Let us make the following substitutions into \eqref{eq:ellipsoid}. Let $v_1 = xt$, $v_2 = yt$ and $v_3 = t^2$ where $x,y \in \R$ and $t \ge 0$. Then \eqref{eq:ellipsoid} becomes \eqref{eq:P}. For $h$ defined as in \eqref{eq:h}, we have $h(v_1,v_2,v_3) = v_3^{-1/2}(v_1 + i v_2) = x+iy$. Therefore a pair $(x,y)$ solves \eqref{eq:P} for some $t$ if and only if that pair corresponds to the image of some $(v_1,v_2,v_3) \in \JNR(A)$ under the map $h$. Note that the values of $v_3 = t^2$ in $\JNR(A)$ must fall between the eigenvalues of $A^*A$ which are the singular values of $A$, squared.  Therefore $\sigma_1 \le t \le \sigma_2$. The map $h$ is undefined when $v_3 = 0$, and so solutions to \eqref{eq:P} corresponding to $t=0$ are not part of the normalized numerical range.
\end{proof}

Normalized numerical ranges of 2-by-2 matrices always have the following symmetry property.

\begin{theorem} \label{thm:symmetry}
Let $A \in M_2(\C)$. If $A$ is invertible, then $\NNR(A)$ is symmetric across the line containing $\pm \sqrt{\det A}$.  If $A$ is rank one, then $\NNR(A)$ is symmetric across any line containing $\pm \tr A$.
\end{theorem}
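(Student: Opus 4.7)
The plan is to exploit the phase-rotation property $F_N(e^{i\phi}A) = e^{i\phi} F_N(A)$ from Proposition~\ref{prop:basics}(d) to reduce each case to symmetry across the real axis, and then to read the symmetry off the defining polynomial $P(x,y,t)$ of Proposition~\ref{prop:key}.

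For the invertible case, I set $\phi = -\tfrac{1}{2}\arg(\det A)$; after replacing $A$ by $e^{i\phi}A$ we may assume $\det A = d > 0$, and the line through $\pm\sqrt{\det A}$ becomes the real axis. With $d > 0$ the coefficients of Lemma~\ref{lem:ellipsoid} collapse nicely: $a_4 = 0$, $a_{10} = d^2$, $a_7 = d\,a_5$, and $a_8 = -d\,a_6$. Using these relations I expect to verify directly that
\[
\frac{t^4}{d^2}\,P(x,y,d/t) = P(x,-y,t),
\]
which says $E(d/t) = \overline{E(t)}$ as subsets of $\C$. Since $\sigma_1\sigma_2 = |\det A| = d$, the map $t \mapsto d/t$ is an involution of $[\sigma_1,\sigma_2]$ (fixing $\sqrt{d}$ and swapping $\sigma_1 \leftrightarrow \sigma_2$), so taking unions yields
\[
F_N(A) = \bigcup_{t \in [\sigma_1,\sigma_2]} E(t) = \bigcup_{t \in [\sigma_1,\sigma_2]} \overline{E(t)} = \overline{F_N(A)}.
\]

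For the rank-one case, after rotating to make $\tr A$ real and nonnegative, the line through $\pm\tr A$ becomes the real axis (or, when $\tr A = 0$, every line through the origin). The hypotheses $\det A = 0$ and $\im \tr A = 0$ force $a_4 = a_6 = a_7 = a_8 = a_{10} = 0$ together with $a_1 = a_2 = \tr(A^*A)$, so $P(x,y,t)$ reduces to $t^2\bigl[\tr(A^*A)(x^2+y^2) + a_9 - 2\re(\tr A)\,xt + t^2\bigr]$. Since $y$ appears only through $y^2$, each $E(t)$ is a circle centered on the real axis, and hence $F_N(A)$ is symmetric across the real axis. In the subcase $\tr A = 0$ we additionally obtain $a_5 = 0$, so $P$ depends only on $x^2+y^2$ and $t$; each $E(t)$ is then an origin-centered circle and $F_N(A)$ is rotationally symmetric, matching the claim that every line through the origin works.

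The main obstacle is the polynomial identity in the invertible case: verifying that the substitution $t \mapsto d/t$ in $P(x,y,t)$ interchanges the $t$ and $t^3$ coefficients while preserving the $t^2$ coefficient, and doing so in exactly the way that reproduces $P(x,-y,t)$. What makes this work is the reality of $\det A$ (which kills the $xy$ cross-term $a_4$) together with the proportionalities $a_7 = d\,a_5$ and $a_8 = -d\,a_6$; neither would hold without first rotating $\det A$ onto the positive real axis, which is why that preliminary normalization is essential rather than cosmetic.
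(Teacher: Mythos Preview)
Your proposal is correct and follows essentially the same route as the paper's proof: rotate so the relevant line becomes the real axis, then use the coefficient relations from Lemma~\ref{lem:ellipsoid} to exhibit a symmetry of $P(x,y,t)$ that swaps $y\leftrightarrow -y$ under a reparametrization of $t$. The only cosmetic difference is that the paper further scales so that $\det A=1$ and then checks $P(x,y,t)=t^4P(x,-y,1/t)$, whereas you keep $d=\det A>0$ and verify the equivalent identity $\tfrac{t^4}{d^2}P(x,y,d/t)=P(x,-y,t)$; your use of $\sigma_1\sigma_2=d$ to see that $t\mapsto d/t$ preserves $[\sigma_1,\sigma_2]$ is exactly the unscaled version of the paper's observation that $\sigma_1=\|A\|^{-1}$, $\sigma_2=\|A\|$.
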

\begin{proof}
Let us start with the invertible case. Note that $\NNR(A) = e^{i \theta} \NNR(e^{-i\theta}A)$ for any $\theta$ by Proposition \ref{prop:basics}(d).
Setting $\theta = \frac{1}{2}\arg(\det A)$ we may thus assume that $\det A > 0$. We can also scale $A$ by any positive constant without changing the normalized numerical range so we will assume without loss of generality that $\det A = 1$.  When $\det A = 1$, the coefficients $a_i$ from Lemma \ref{lem:ellipsoid} satisfy $a_4 = 0$, $a_7 = a_5$, $a_8 = -a_6$, and $a_{10} = a_3$.  Therefore \eqref{eq:P} becomes:
$$P(x,y,t) = (a_1 x^2 + a_2 y^2 + a_9)t^2 + a_5x (t^3+t) + a_6 y (t^3 -t)+ a_3(t^4+1).$$
Observe that $P(x,y,t) = t^4 P(x,-y,1/t)$ for all $(x,y,t) \in \R^3$ with $t > 0$. Note also that the singular values of $A$ are $\sigma_1 = \|A\|^{-1}$ and $\sigma_2 = \|A\|$. It follows that if $x+iy \in E(t)$ for some $\sigma_1 \le t \le \sigma_2$, then $x-iy \in E(t^{-1})$ and $\sigma_1 \le t^{-1} \le \sigma_2$.  Therefore $\NNR(A)$ is symmetric across the real axis in $\C$. By rotating back, the conclusion of this theorem holds for all invertible $A \in M_2(\C)$.

If $A$ is rank one, then $\det A = 0$ and the coefficients $a_i$ from Lemma \ref{lem:ellipsoid} satisfy $a_2 = a_1$, $a_4 = a_7 = a_8 = a_{10} = 0$. So \eqref{eq:P} becomes
\begin{multline*}
P(x,y,t) = (a_1 x^2 + a_1 y^2 + a_9)t^2 + (a_5 x + a_6 y) t^3 + a_3 t^4 \\
= t^2(\tr(A^*A)(x^2 + y^2) + |\tr A|^2 - \tr(A^*A) - 2(\re(\tr A) x + \im(\tr A) y) t + t^2).
\end{multline*}
In particular the ellipses $E(t)$ are all circles with centers along the line in $\C$ from the origin through $\tr A$. If $\tr A \neq 0$, then $\NNR(A)$ is symmetric across the line through $\pm \tr A$. If $\tr A = 0$, then $\NNR(A)$ will be a circle centered at the origin, and therefore will be symmetric across all lines through the origin.
\end{proof}
Note that the case when $A \in M_2(\C)$ is rank one and $\tr A = 0$ was covered in \cite[Proposition 4.1]{Gev11}.

For all rank one 2-by-2 matrices, it was shown in \cite[Proposition 3.1]{Gev11} that the boundary of the normalized numerical range is the union of two elliptical arcs. The next theorem provides the description of a larger class of $2$-by-$2$ matrices $A$ for which $F_N(A)$ has the same property.

\begin{theorem} \label{thm:twoEllipses}
Suppose $A \in M_2(\C) \backslash \{ 0 \}$. If $tr A$ and $\pm \sqrt{\det A}$ are collinear in $\C$, then the boundary of $\NNR(A)$ is the union of at most two elliptical arcs.
\end{theorem}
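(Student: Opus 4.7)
The plan is to reduce to a canonical form and then introduce the substitution $s = t + t^{-1}$ to convert the quartic $P$ of Proposition~\ref{prop:key} into a quadratic in $s$ whose level curves and envelope are ellipses. The rank one case is already handled by \cite[Proposition 3.1]{Gev11} (as noted in the remark preceding the theorem), so I focus on the invertible case.

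First I would apply Proposition~\ref{prop:basics}(d)--(e) to normalize so that $\det A = 1$ and $\tr A \in \R$; the collinearity hypothesis is precisely what guarantees a single rotation can achieve both. As in the proof of Theorem~\ref{thm:symmetry}, the coefficients then satisfy $a_4 = a_6 = a_8 = 0$ and $a_3 = a_{10} = 1$, so
\[ P(x,y,t) = (a_1 x^2 + a_2 y^2 + a_9) t^2 + a_5 x (t^3 + t) + (t^4 + 1). \]
Since $\sigma_1 \sigma_2 = 1$, the change of variables $s = t + 1/t$ maps $[\sigma_1, \sigma_2]$ onto $[2, \sigma_1+\sigma_2]$. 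Using $t^3 + t = t^2 s$ and $t^4 + 1 = t^2(s^2 - 2)$ and dividing by $t^2$, the equation $P(x,y,t) = 0$ becomes the quadratic
\[ Q(x,y,s) := s^2 + a_5 x\, s + (a_1 x^2 + a_2 y^2 + a_9 - 2) = 0 \]
in $s$.

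A point $(x,y)$ lies in $\NNR(A)$ if and only if $Q(x,y,\cdot)$ has a real root in $[2, \sigma_1+\sigma_2]$. Consequently $\partial \NNR(A)$ is contained in the union of (i) the curve $C_2 := \{Q(x,y,2)=0\}$, (ii) the level set $C_{\sigma_1+\sigma_2} := \{Q(x,y,\sigma_1+\sigma_2)=0\}$, and (iii) the envelope $\{\Delta(x,y)=0\}$, where $\Delta$ is the discriminant of $Q$ with respect to $s$. The identities $a_5^2 - 4 a_1 = 4(a_9 - 2)$ and $(\sigma_1+\sigma_2)^2 = \tr(A^*A) + 2 = a_1$ (both routine from Lemma~\ref{lem:ellipsoid}) give
\[ \Delta(x,y) = 4(a_9 - 2)(x^2 - 1) - 4 a_2 y^2, \]
and force $C_{\sigma_1+\sigma_2}$ to collapse to a single point on the real axis. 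Writing $A$ in Schur form, one further checks that $a_9 - 2 \le 0$ and $a_2 \ge 0$, so the envelope is a (possibly degenerate) ellipse; the quadratic $C_2$ is similarly an ellipse. Thus $\partial \NNR(A)$ is contained in the union of the two ellipses $C_2$ and $\{\Delta = 0\}$.

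The main obstacle is the algebraic bookkeeping: verifying the key identities above, checking $a_9 \le 2$ via Schur form, and arguing that $C_{\sigma_1+\sigma_2}$ contributes only an isolated point rather than a full curve. Once this is in place, the simple connectedness of $\NNR(A)$ from Proposition~\ref{prop:basics}(g), together with the symmetry across the real axis from Theorem~\ref{thm:symmetry}, ensures that the boundary is traced by at most one arc on each of $C_2$ and the envelope, for a total of at most two elliptical arcs.
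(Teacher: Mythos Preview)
Your approach is essentially the paper's: normalize to $\det A = 1$ and $\tr A \in \R$, substitute $s = t + t^{-1}$ to reduce $P$ to a quadratic in $s$, and read off the boundary from the endpoint curve $s = 2$ and the discriminant. The algebra you outline checks out, and your observation that $C_{\sigma_1+\sigma_2}$ degenerates to the single real point $(\tr A/(\sigma_1+\sigma_2),0)$ is a nice touch the paper does not make explicit (it instead relies implicitly on $|x|\le 1$, which forces $x\tr A \le \sigma_1+\sigma_2$, so the upper endpoint never enters).

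There is, however, a gap in your last paragraph. Simple connectedness together with the reflection symmetry across $\R$ does \emph{not} by itself force $\partial\NNR(A)$ to use at most one arc from each of the two ellipses: two ellipses symmetric about the real axis can intersect in four points, and a symmetric Jordan curve can then alternate through all four, producing four arcs. What actually makes the count work here is that $C_2$ and the envelope $\{\Delta = 0\}$ meet only along the vertical line $x\tr A = 2$; eliminating $y^2$ between $Q(x,y,2)=0$ and $\Delta(x,y)=0$ yields $((\tr A)x - 2)^2 = 0$. Equivalently---and this is how the paper argues---the vertex of the parabola $s \mapsto Q(x,y,s)$ sits at $s = x\tr A$, so for $x\tr A \ge 2$ the relevant boundary condition is $\Delta = 0$, while for $x\tr A \le 2$ the parabola is increasing on $[2,\sigma_1+\sigma_2]$ and the boundary condition is $Q(x,y,2)=0$. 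Either computation is short, but one of them is needed to justify ``at most two arcs.''
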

\begin{proof}
By rotation we may assume without loss of generality that \eq{noneg} \det A,\tr A\geq 0.\en  Indeed, for invertible $A$ let us rotate $A$ in such a way that $\det A$ becomes positive. Then the line passing through
$\pm\det A$ is simply $\R$, and $\tr A\in \R$ due to the collinearity condition. Passing from $A$ to $-A$ if needed, we can change the sign of $\tr A$ without changing $\det A$. In its turn, if $A$ is singular, then the equality $\det A=0$ persists under rotations, while $\tr A$ can be made non-negative.

We will prove now that, under conditions \eqref{noneg}, the boundary of $F_N(A)$ is given by the equations
\eq{eq:E1}
x^2 + \left( \frac{\tr(A^*A) - 2 \det A}{\tr (A^*A) - (\tr A)^2 + 2 \det A} \right)y^2 = 1
\en
for $x$ satisfying $x\tr A \ge 2 \sqrt{\det A}$, and by
\begin{multline} \label{eq:E2}
\left(\frac{\tr(A^*A)+2 \det A}{\tr(A^*A)-2\det A}\right) \left( x - \frac{2\tr A \sqrt{\det A}}{\tr(A^*A)+2 \det A} \right)^2 + y^2 \\ = \frac{\tr(A^*A) - (\tr A)^2 + 2 \det A}{\tr(A^*A) +2 \det A}
\end{multline}
for $x$ such that $x\tr A \le 2 \sqrt{\det A}$.

We will separate the proof into two cases.

{\sl Case 1.} Suppose that $A$ is invertible. By scaling, in addition to \eqref{noneg} we may assume without loss of generality that $\det A = 1$.  By Proposition \ref{prop:key}, $\NNR(A)$ is the union of the ellipses given by $P$ in \eqref{eq:P}. It will be convenient to let $Q = t^{-2}P$. Since $\tr A \in \R$ and $\det A = 1$, the coefficients $a_4, a_6, a_8 = 0$ in Lemma \ref{lem:ellipsoid}, while $a_5 = a_7 = -2 \tr A$, and $a_{10} = a_3 = 1$. Therefore
$$Q(x,y,t) = a_1 x^2 + a_2 y^2 + a_5 x (t+t^{-1}) + (t^2 + t^{-2}) + a_9 = 0.$$
Let $T := (t+t^{-1})$. Then $(t^2 + t^{-2}) = T^2 - 2$, and the equation above can be expressed as
$$Q(x,y,T) = a_1 x^2 + a_2 y^2 + a_5 x T + (T^2-2) + a_9 = 0.$$
Points on the boundary of $\NNR(A)$ are contained in the envelope of the family of ellipses $\{x+iy : Q(x,y,T) = 0\}$ indexed by $T$. This envelope consists of the points where
$$Q = \frac{\partial}{\partial T} Q = 0.$$
We compute
$$\frac{\partial}{\partial T} Q = -2 (\tr A) x  + 2T = 0,$$
which has solution $T = x\tr A$. This solution only applies if $x\tr A \ge 2$, as $T = t+t^{-1} \ge 2$ for all $\sigma_ 1 = \|A\|^{-1} \le t \le \sigma_2 = \|A \|$. Therefore, when $x \tr A \le 2$, the corresponding points on the boundary must be solutions of $P(x,y,T) = 0$ with $T =2$, or equivalently $t = 1$.  Substituting $t = 1$ into \eqref{eq:P} gives the equation
$$ (\tr(A^*A)+2) x^2 + (\tr(A^*A)-2) y^2 -4 \tr A x  + 2  + (\tr A)^2 - \tr(A^*A) = 0.$$
If we collect $x$ terms and complete the square, we get
\begin{multline*} (\tr(A^*A)+2) \left( x - \frac{2\tr A}{\tr(A^*A)+2} \right)^2 + (\tr(A^*A)-2) y^2 \\ = \frac{4 (\tr A)^2}{\tr(A^*A)+2}  - (\tr A)^2 + \tr(A^*A) - 2.\end{multline*}
Dividing through by $\tr(A^*A)-2$, we get
\begin{align*}
\left(\frac{\tr(A^*A)+2}{\tr(A^*A)-2}\right)\!\!\!\left( x - \frac{2\tr A}{\tr(A^*A)+2} \right)^2 + y^2 &= \frac{4 (\tr A)^2}{(\tr(A^*A))^2-4}  - \frac{(\tr A)^2}{\tr(A^*A)-2} + 1 \\
~ &= \frac{(\tr A)^2 (2 - \tr(A^*A))}{(\tr(A^*A))^2 - 4} + 1 \\
~ &= 1 - \frac{(\tr A)^2}{\tr(A^*A) +2}.
\end{align*}
If we replace $A$ by $A/\sqrt{\det A}$ when $\det A > 0$ in the equation above, we obtain \eqref{eq:E2}.

If, on the other hand, $x \tr A \ge 2$, then $T = x\tr A$. We can substitute $x \tr A$ for $T$ in $Q(x,y,T)$, and we obtain the following equation:
\[ (\tr(A^*A) + 2)x^2+(\tr(A^*A)-2)y^2-2(\tr A)^2 x^2+x^2(\tr A)^2-2+\abs{\tr A}^2-\tr(A^*A)\! = \! 0 \]
which simplifies to
$$(\tr(A^*A) - (\tr A)^2 + 2) x^2 + (\tr(A^*A) - 2) y^2 = \tr(A^*A) - \abs{\tr A}^2 + 2.$$
Replacing $A$ by $A/\sqrt{\det A}$ we see that this equation is equivalent to \eqref{eq:E1}.

{\sl Case 2.} $A$ is singular. Let $P$ be as in \eqref{eq:P}. Since $\det A = 0$ and $\tr A \in \R$, we have $a_2 = a_1$, $a_3 = 1$, $a_4 = a_6 = a_7 = a_8 = a_{10} = 0$, and $a_5 = \tr A$.  So
$$P(x,y,t) = (a_1 x^2 + a_1 y^2 + a_9) t^2 + a_5 x t^3 + t^4.$$
Let $Q = t^{-2}P$ and note that $E(t) = \{x+iy: Q(x,y,t) = 0 \}$ for all $0 < t \le \|A\|$. Since $\NNR(A)$ is the union of the circles $E(t)$, the boundary of $\NNR(A)$ satisfies the envelope equation
$$Q(x,y,t) = \frac{\partial}{\partial t} Q(x,y,t) = 0.$$
We compute
$$\frac{\partial}{\partial t} Q(x,y,t) =  a_5 x  + 2t = 0.$$
We may therefore substitute $-\frac{1}{2}a_5 x = x\tr A$ for $t$ in $Q$, as long as $x\tr A>0$.  We get the equation
\begin{align*}
0 &= a_1 x^2 + a_1 y^2 + a_9 - \tfrac{1}{4} a_5^2 x^2 \\
&=\tr (A^*A) (x^2 + y^2) + (\tr A)^2 - \tr A^*A - (\tr A)^2 x^2= 0. \\
&=(\tr (A^*A) - (\tr A)^2) x^2 + \tr (A^*A) y^2 + (\tr A)^2 - \tr(A^*A).
\end{align*}
This is equivalent to \eqref{eq:E1}.

If $x\tr A \le 0$, then the envelope formula no longer applies, and the boundary is determined by the circle $E(0)$.  Note that this portion of the boundary is not a subset of $\NNR(A)$, and therefore $\NNR(A)$ is not closed.  The equation for this circle is obtained by substituting $t = 0$ into the equation $Q(x,y,t) = 0$, which gives \eqref{eq:E2}.
\end{proof}

The reason for ``at most'' clause in the statement of Theorem~\ref{thm:twoEllipses} is that one of the arcs \eqref{eq:E1},\eqref{eq:E2} may degenerate into a point while the other then becomes a full ellipse. Here is when and how this happens.

\begin{corollary} \label{cor:sameAbsVal}
Suppose that $A \in M_2(\C) \backslash \{0 \}$ has eigenvalues $\lambda_1$ and $\lambda_2$ such that $|\lambda_1| = |\lambda_2|$, then $\NNR(A)$ is an elliptical disk. In the case when $\det A \ge 0$, the equation for the boundary of this ellipse is \eqref{eq:E2}. The elliptical disk is closed unless $\det A =0$, in which case $\NNR(A)$ is the open unit disk.
\end{corollary}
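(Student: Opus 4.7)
The plan is to show that under the hypothesis $|\lambda_1|=|\lambda_2|$ the arc \eqref{eq:E1} of Theorem~\ref{thm:twoEllipses} collapses to at most a single point, while the arc \eqref{eq:E2} becomes the entire ellipse it lies on, so that $\NNR(A)$ turns out to be the closed elliptical disk bounded by \eqref{eq:E2}.

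First I would check that the collinearity hypothesis of Theorem~\ref{thm:twoEllipses} is automatic. Writing $\lambda_j=re^{i\alpha_j}$ with $r=|\lambda_1|=|\lambda_2|$, one computes $\sqrt{\det A}=\pm re^{i(\alpha_1+\alpha_2)/2}$ and $\tr A=2r\cos\!\tfrac{\alpha_1-\alpha_2}{2}\,e^{i(\alpha_1+\alpha_2)/2}$, so $\tr A$ is a real scalar multiple of $\sqrt{\det A}$. Taking moduli yields the crucial estimate
\[
|\tr A|^2\le 4|\det A|.
\]
After the normalization used in the proof of Theorem~\ref{thm:twoEllipses} (Proposition~\ref{prop:basics}(d),(e)), I may assume $\det A\ge 0$ and $\tr A\ge 0$, and in the invertible case further scale so that $\det A=1$; the estimate then reads $\tr A\le 2$.

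Next, in the invertible case, arc \eqref{eq:E1} is carved out of the ellipse $x^2+cy^2=1$ by the constraint $x\,\tr A\ge 2$. Since $|x|\le 1$ on this ellipse and $\tr A\le 2$, this forces $x=1$ (only when $\tr A=2$, i.e.\ $\lambda_1=\lambda_2$) or has no solution, so \eqref{eq:E1} contributes no arc. For \eqref{eq:E2} one must verify the reverse: that $x\,\tr A\le 2$ throughout the whole ellipse. Using the center $\bigl(\tfrac{2\tr A}{\tr(A^*A)+2},0\bigr)$ and the $x$-semiaxis read off from \eqref{eq:E2}, the maximum of $x$ on that ellipse is $\frac{2\tr A+\sqrt{(\tr(A^*A)-2)(\tr(A^*A)+2-(\tr A)^2)}}{\tr(A^*A)+2}$, and after clearing the resulting inequality (squaring, using that $\tr(A^*A)+2\ge(\tr A)^2$ by AM--GM plus the estimate above) it simplifies to exactly $(\tr A)^2\le 4$. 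Thus the arc \eqref{eq:E2} is the complete ellipse, which is therefore the topological boundary of $\NNR(A)$.

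Finally, $\NNR(A)$ is closed by Proposition~\ref{prop:basics}(f), simply connected by Proposition~\ref{prop:basics}(g), and has nonempty interior (e.g., the image under $h$ of an interior point of the ellipsoid $\DW(A)$); being a simply connected planar continuum whose boundary is the simple closed curve \eqref{eq:E2}, it must coincide with the closed disk that curve bounds. The singular case $\det A=0$ forces $\lambda_1=\lambda_2=0$, so $A$ is nilpotent and $\tr A=0$; this falls under Case~2 of Theorem~\ref{thm:twoEllipses}, where the envelope condition $x\,\tr A>0$ is vacuous, and a direct computation on $A=\bigl(\begin{smallmatrix}0&c\\0&0\end{smallmatrix}\bigr)$ (or substituting $\tr A=0$, $\det A=0$ into \eqref{eq:E2}) shows $\NNR(A)$ is the open unit disk. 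The main obstacle I anticipate is the bookkeeping needed to show that \eqref{eq:E2} is the \emph{full} ellipse rather than a proper arc; the rest follows from results already in hand.
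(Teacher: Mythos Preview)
Your approach is correct and follows the same overall route as the paper (invoke Theorem~\ref{thm:twoEllipses} after verifying the collinearity hypothesis), but the paper dispatches the ``bookkeeping'' you anticipate in a single stroke that you may find cleaner. Instead of bounding $x$ separately on each of the two ellipses, the paper observes directly that every point $x+iy\in\NNR(A)$ satisfies $|x|\le 1$ by Proposition~\ref{prop:basics}(a), whence (after the normalization $\det A\ge 0$, $\tr A\ge 0$)
\[
x\,\tr A \;\le\; |\tr A| \;\le\; |\lambda_1|+|\lambda_2| \;=\; 2|\lambda_1| \;=\; 2\sqrt{\det A}.
\]
This global inequality simultaneously kills the arc~\eqref{eq:E1} and shows that the defining constraint for~\eqref{eq:E2} holds on all of $\NNR(A)$, so the boundary is the full ellipse~\eqref{eq:E2} with no further computation needed. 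Your explicit calculation of the maximal $x$ on the ellipse~\eqref{eq:E2} reaches the same conclusion but is unnecessary.

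One small slip: your appeal to ``an interior point of the ellipsoid $\DW(A)$'' is not quite well-posed, since for non-normal $A\in M_2(\C)$ the set $\DW(A)$ is a hollow ellipsoidal surface in $\R^3$ and has empty interior. This is harmless, though, because simple connectedness of $\NNR(A)$ (Proposition~\ref{prop:basics}(g)) already forces a closed set with boundary a Jordan curve to be the closed region it bounds; the nonempty-interior remark can simply be dropped.
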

Note that the case when the eigenvalues of $A$ (not just their absolute values) coincide, was considered in \cite[Propositions 5.1]{Gev11}.

\begin{proof}
We may assume by rotating that $\det A \ge 0$. Then $|\lambda_1| = |\lambda_2|$ and $\lambda_1 = \overline{\lambda_2}$, so $\tr A \in \R$. Therefore Theorem \ref{thm:twoEllipses} applies. We also know that the real part of any point $x+iy \in \NNR(A)$ has absolute value at most one by Proposition \ref{prop:basics}.  Therefore $x \tr A \le |\lambda_1| + |\lambda_2| = 2|\lambda_1| = 2 \sqrt{\det A}$ for all $x+iy \in \NNR(A)$. So the boundary of the $\NNR(A)$ is given by \eqref{eq:E2}.  If $\det A \ne 0$, then $\NNR(A)$ is closed by Proposition \ref{prop:basics}(c).

If $|\lambda_1| = |\lambda_2| = 0$, then $\tr A = \det A = 0$, and \eqref{eq:E2} becomes $x^2+y^2 =1$.  In this case, no point on the boundary of $\NNR(A)$ is contained in $\NNR(A)$ since the family of ellipses $E(t)$ defined by Proposition \ref{prop:key} is expanding as $t \rightarrow 0$, with only the limiting ellipse $E(0)$ containing the boundary. Since $E(0)$ is not part of $\NNR(A)$, we see that $\NNR(A)$ is the open unit disk.
\end{proof}

There is another class of 2-by-2 matrices with elliptical normalized numerical ranges. 

\begin{theorem} \label{thm:imagTr}
Suppose that $A \in M_2(\C)$ has non-zero eigenvalues $\lambda_1, \lambda_2$ such that $\lambda_1/\lambda_2 < 0$.  Then $\NNR(A)$ is a closed elliptical disk. In the case when $\det A > 0$, the ellipse is given by the equation
\begin{equation} \label{eq:imagTrE1}
\left(\frac{\tr (A^*A) +2 \det A}{\tr(A^*A)-2 \det A - |\tr A|^2}\right) x^2 + y^2 = 1.
\end{equation}
\end{theorem}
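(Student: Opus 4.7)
The plan is to mimic the strategy used in the proof of Theorem \ref{thm:twoEllipses}, namely to normalize $A$ by rotation and scaling, then extract the boundary of $\NNR(A)$ from the polynomial $P(x,y,t)$ of Proposition~\ref{prop:key}. First I would use property (d) of Proposition~\ref{prop:basics} to rotate so that $\det A > 0$. Because $\lambda_1/\lambda_2 < 0$ forces $\lambda_1 = c\lambda_2$ for some $c < 0$, the requirement $\det A = c\lambda_2^2 > 0$ forces $\lambda_2^2 < 0$, so both eigenvalues lie on $i\R$ on opposite sides of the origin, and $\tr A \in i\R$. By property (e) I would rescale so that $\det A = 1$, and set $\tr A = i\beta$. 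With these assumptions the coefficients of Lemma~\ref{lem:ellipsoid} collapse: $a_4 = a_5 = a_7 = 0$, $a_6 = -2\beta$, $a_8 = 2\beta$, $a_1 = \tr(A^*A)+2$, $a_2 = \tr(A^*A)-2$, $a_3 = a_{10} = 1$, and $a_9 = \beta^2 - \tr(A^*A)$.

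Next I would substitute these into \eqref{eq:P} and divide by $t^2$. Introducing the new variable $S = t - t^{-1}$, and using $t^2 + t^{-2} = S^2 + 2$, the equation $P(x,y,t)=0$ becomes the quadratic
\[
Q(x,y,S) := S^2 - 2\beta y\,S + \bigl((\tr(A^*A)+2)x^2 + (\tr(A^*A)-2)y^2 + \beta^2 - \tr(A^*A) + 2\bigr) = 0.
\]
The existence of a real root in $S$ is equivalent to the discriminant being non-negative, which after grouping $x^2$ and $y^2$ terms factors as
\[
\bigl(\tr(A^*A) - \beta^2 - 2\bigr)(1 - y^2) \ge (\tr(A^*A)+2)\,x^2.
\]
Since $\tr(A^*A) \ge |\lambda_1|^2 + |\lambda_2|^2 = \beta^2 + 2$, with equality exactly in the normal case, the coefficient $\tr(A^*A)-\beta^2-2 = \tr(A^*A)-2\det A-|\tr A|^2$ is positive when $A$ is non-normal, and dividing through yields precisely \eqref{eq:imagTrE1}.

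Paragraph three would verify that whenever the discriminant is non-negative, the resulting roots $S = \beta y \pm \sqrt{\Delta}$ actually lie in the admissible range $[-s_0, s_0]$ with $s_0 = \sigma_2 - \sigma_2^{-1}$, which corresponds to $t \in [\sigma_1, \sigma_2]$ as required by Proposition~\ref{prop:key}. Using $\sigma_1\sigma_2 = 1$ one computes $s_0^2 = \tr(A^*A)-2$, whence the same estimate $\tr(A^*A) \ge \beta^2 + 2$ gives $|\beta| \le s_0$; combined with $|y| \le 1$ on the disk this places the vertex $S = \beta y$ of the upward-opening quadratic $Q(x,y,\cdot)$ inside $[-s_0,s_0]$. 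A direct completion of squares gives
\[
Q(x,y,\pm s_0) = (\tr(A^*A)+2)\,x^2 + (\beta \mp y s_0)^2 \ge 0,
\]
so both roots lie in $[-s_0, s_0]$. Therefore $(x,y)$ belongs to the closed elliptical disk \eqref{eq:imagTrE1} if and only if $(x,y) \in \NNR(A)$. Closedness follows from Proposition~\ref{prop:basics}(f) since $\det A \ne 0$.

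The only real obstacle is the bookkeeping in the discriminant simplification and in checking the endpoint values $Q(x,y,\pm s_0)$; the key algebraic identity that makes everything work is $\tr(A^*A) - 2\det A - |\tr A|^2 \ge 0$, which is the Schatten-type inequality $\tr(A^*A) \ge |\lambda_1|^2 + |\lambda_2|^2$ rewritten in the present normalization. In the degenerate normal case this quantity vanishes, and the ``ellipse'' \eqref{eq:imagTrE1} collapses to the segment $x=0$, $|y|\le 1$, recovering Gevorgyan's hyperbolic arc description from \cite{Gev091}.
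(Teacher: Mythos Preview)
Your proof is correct and follows essentially the same route as the paper: the same normalization to $\det A = 1$, $\tr A \in i\R$, the same substitution $S = t - t^{-1}$ (the paper calls it $T$), and the same reduction of $t^{-2}P(x,y,t)=0$ to a quadratic in the new variable. The only cosmetic difference is that you extract the region via the discriminant condition, while the paper extracts the boundary via the envelope equation $Q = \partial Q/\partial T = 0$; these are of course equivalent, since the envelope is precisely the locus where the discriminant vanishes.

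Where your argument actually adds something is paragraph three. The paper writes ``Since $\NNR(A)$ is the set of $x+iy$ such that $(x,y)$ solves $Q(x,y,T)=0$ for some real $T$, it follows that $\NNR(A)$ must be an ellipse,'' and then proceeds to the envelope computation without ever checking that the relevant values of $T$ fall in the admissible interval $[\sigma_1-\sigma_1^{-1},\,\sigma_2-\sigma_2^{-1}]$ dictated by Proposition~\ref{prop:key}. Your verification that $Q(x,y,\pm s_0) = (\tr(A^*A)+2)x^2 + (\beta \mp y s_0)^2 \ge 0$ together with the vertex bound $|\beta y|\le s_0$ closes this gap and shows that every point of the closed disk \eqref{eq:imagTrE1} is genuinely attained by some $t\in[\sigma_1,\sigma_2]$. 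So your version is, if anything, slightly more complete than the paper's.
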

Observe that classes of matrices considered in Theorems~\ref{thm:twoEllipses} and \ref{thm:imagTr} overlap exactly at $A\in M_2(\C)$
such that $\lambda_1=-\lambda_2\neq 0$. In this case the ellipticity of $F_N(A)$ follows also from Corollary~\ref{cor:sameAbsVal}. Furthermore,
such matrices  are traceless, and thus unitarily similar to matrices with zero main diagonal --- the case treated in \cite[Proposition 4.1]{Gev11}.
\begin{proof}
By applying a suitable complex scaling we may assume that $\det A = 1$ without changing the value of $\lambda_1/\lambda_2$. Then $\lambda_1 \lambda_2 = 1$ and $\lambda_1/\lambda_2 < 0$, so both eigenvalues must be purely imaginary and therefore $\re(\tr A) =0$.  Let $P$ be as in \eqref{eq:P} and let $Q = t^{-2} P$.  Since $\det A = 1$ and $\re(\tr A) = 0$ we have the following identities in the coefficients $a_i$ defined in Lemma \ref{lem:ellipsoid}: $a_3 = a_{10} = 1$, $a_4 = a_5 = a_7 = 0$, $a_8 = -a_6$.  Then
$$Q(x,y,t) = (a_1 x^2 + a_3 y^2 + a_9) + a_6 y(t-t^{-1}) + (t^2+t^{-2}).$$
It is convenient to let $T = t-t^{-1}$, and then
$$Q(x,y,T) = (a_1 x^2 + a_3 y^2 + a_9) + a_6 y T + (T^2 + 2) = 0,$$
which is the equation of an ellipsoid in $\R^3$.  Since the $\NNR(A)$ is the set of $x+iy$ such that $(x,y)$ solves $Q(x,y,T) = 0$ for some real $T$, it follows that $\NNR(A)$ must be an ellipse.  We now use the envelope equation
$$Q = \frac{\partial}{\partial T}Q = 0.$$
to derive a formula for this ellipse. We compute
$$\frac{\partial}{\partial T} Q(x,y,T) = a_6 y + 2 T  = 0.$$
Substituting $T = -\frac{1}{2}a_6 y= \im(\tr A) y$ into $Q(x,y,T)$ gives
\begin{align*}
0 &= (a_1 x^2 + a_3 y^2 + a_9) + a_6 y T + (T^2 + 2) \\
&= a_1 x^2 + a_3 y^2 + a_9 - \tfrac{1}{4} a_6^2 y^2 + 2 \\
&= (\tr A^*A + 2) x^2 + (\tr A^*A - |\tr A|^2 - 2) y^2  + |\tr A|^2 - \tr(A^*A) + 2.
\end{align*}
If $\det A = 1$, this is equivalent to \eqref{eq:imagTrE1}. If $\det A > 0$, then we can replace $A$ by $A/\sqrt{\det A}$ in the equation above to get \eqref{eq:imagTrE1}.
\end{proof}

\begin{remark} \label{rem:oppositeEvals}
Conditions of Theorem~\ref{thm:imagTr} hold for any real matrix with a negative determinant.
\end{remark}

The cases outlined in Theorem \ref{thm:imagTr} and Corollary \ref{cor:sameAbsVal} are the only cases where the normalized numerical range of a 2-by-2 matrix is an ellipse.
\begin{theorem}
For $A \in M_2 (\C) \backslash \{0\}$ with eigenvalues $\lambda_1$ and $\lambda_2$, the boundary of $\NNR(A)$ is an ellipse if and only if $|\lambda_1| = |\lambda_2|$ or $\lambda_1/\lambda_2 < 0$.
\end{theorem}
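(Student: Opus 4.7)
The ``if'' direction is immediate from the results already established: Corollary~\ref{cor:sameAbsVal} handles $|\lambda_1|=|\lambda_2|$, while a rotation making $\det A>0$ followed by Theorem~\ref{thm:imagTr} handles $\lambda_1/\lambda_2<0$, since that ratio being negative forces both rotated eigenvalues onto the imaginary axis with opposite signs. For the converse, assume $\partial F_N(A)$ is an ellipse. Using Proposition~\ref{prop:basics}(d) to rotate $A$, I would reduce to $\det A\ge 0$ and split into three cases according to the location of $\tr A$.

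Case~1: $\tr A\in\R$; this covers the singular subcase since with $\det A=0$ any remaining phase in $\tr A$ can be removed by a further rotation. By Theorem~\ref{thm:twoEllipses}, $\partial F_N(A)$ is the union of arcs of the two ellipses \eqref{eq:E1} and \eqref{eq:E2}. Whenever $\tr A\ne 0$ these are distinct conics: \eqref{eq:E1} is centred at the origin while \eqref{eq:E2} has centre $\bigl(2\tr A\sqrt{\det A}/(\tr(A^*A)+2\det A),0\bigr)\ne 0$, and a direct check shows they meet only at isolated points, with mismatching curvatures there. Hence the union can only be a single ellipse when one of the two arcs degenerates. The range condition $x\tr A\ge 2\sqrt{\det A}$ on \eqref{eq:E1}, together with the bound $|x|\le 1$ from Proposition~\ref{prop:basics}(a), forces this arc to be empty or reduce to a single point precisely when $|\tr A|\le 2\sqrt{\det A}$; the identity $(\tr A)^2-4\det A=(\lambda_1-\lambda_2)^2$ shows this in turn is equivalent to $|\lambda_1|=|\lambda_2|$.

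Case~2: $\tr A\in i\R\setminus\{0\}$ with $\det A>0$. Here Theorem~\ref{thm:imagTr} applies, and the conditions $\lambda_1\lambda_2=\det A>0$ and $\lambda_1+\lambda_2\in i\R\setminus\{0\}$ force both eigenvalues onto the imaginary axis with opposite signs, yielding $\lambda_1/\lambda_2<0$.

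Case~3 (the principal obstacle): $\det A>0$ with $\alpha:=\re(\tr A)$ and $\beta:=\im(\tr A)$ both nonzero. Here one must show the boundary is not an ellipse, and my plan is to analyze the envelope of $E(t)$ directly. Under the substitution $u=t+t^{-1}$, $v=t-t^{-1}$ (satisfying $u^2-v^2=4$), $P/t^2$ simplifies to $Q=u^2-2\alpha xu-2\beta yv+c_2-2$, and the envelope condition $dQ/dt=0$ becomes $uv=\alpha xv+\beta yu$. In contrast with the borderline cases $\alpha=0$ or $\beta=0$---where the relation factors linearly in $v$ or $u$ and produces the conic boundaries of Theorems~\ref{thm:twoEllipses} and \ref{thm:imagTr}---when $\alpha\beta\ne 0$ this envelope equation is genuinely coupled. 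Solving it for $v=\beta yu/(u-\alpha x)$ and substituting into both $Q=0$ and $u^2-v^2=4$ yields two polynomial equations in $u$ whose resultant furnishes the algebraic equation of $\partial F_N(A)$. The hard step is to verify that the top-degree terms of this resultant in $(x,y)$ do not all cancel when $\alpha\beta\ne 0$; the $\alpha=0$ and $\beta=0$ calculations serve as a natural sanity check, and the bookkeeping of leading coefficients is the principal difficulty.
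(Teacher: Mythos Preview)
Your ``if'' direction and Cases~1--2 are sound and track the paper's structure. One small repair in Case~1: when $\det A=0$ the centre of \eqref{eq:E2} is also at the origin, so your centre-comparison fails there; compare the semi-axes instead.

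Case~3 is where the proposal has a real gap. You set up the envelope system correctly ($Q=0$, $uv=\alpha xv+\beta yu$, $u^2-v^2=4$), but you have not carried out the elimination, and---more seriously---even establishing that the resultant has degree exceeding~$2$ would not finish the argument. The resultant generically carries extraneous components (cf.\ the remark following Theorem~\ref{thm:resultant} and Figure~\ref{fig:sharp}), so $\partial F_N(A)$ could still lie on a quadratic \emph{factor} of $R(x,y)$. To complete your route you would have to show $R$ admits no conic factor whose real locus is $\partial F_N(A)$, which is substantially harder than a leading-coefficient count and is not addressed in the proposal.

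The paper bypasses all of this with a short geometric argument. Assume $\det A=1$, $\re(\tr A)>0$, $|\lambda_1|\ne|\lambda_2|$, and suppose for contradiction that $\partial F_N(A)$ is an ellipse. Theorem~\ref{thm:symmetry} forces its axes to be horizontal and vertical with centre on $\R$, so the rightmost point $z_0$ has $\im z_0=0$. Since $z_0$ lies on some $E(t_0)$ and must be the rightmost point of that axis-aligned ellipse as well, the centre of $E(t_0)$ is real; by \eqref{eq:vertCenter} this forces $t_0=1$ or $\im(\tr A)=0$, the latter already being your Case~1. Repeating for the leftmost point, and then for the topmost (whose real part must equal the common centre $x(1)$, forcing $t_1+t_1^{-1}=2$ via \eqref{eq:horizCenter}), shows all four extreme points lie on $E(1)$; hence the putative ellipse is $E(1)$ itself. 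A Lagrange-multiplier computation on $E(1)$ then places its maximal-modulus points at $x=\tfrac12\re(\tr A)=\tfrac12(R+R^{-1})\cos\theta$ (writing $\lambda_1=Re^{i\theta}$, $\lambda_2=R^{-1}e^{-i\theta}$), whereas Proposition~\ref{prop:basics}(b) says those points are $\lambda_j/|\lambda_j|$ with real part $\cos\theta$. These agree only when $R=1$, contradicting $|\lambda_1|\ne|\lambda_2|$. Thus the paper replaces your elimination program by the single observation that the only candidate ellipse is $E(1)$, followed by a one-variable optimisation.
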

\begin{proof}
If $A$ is singular, then Theorem \ref{thm:twoEllipses} applies. By rotating, we may assume that $\tr A \in \R$, and then there is a single ellipse that defines the boundary of $A$ if and only if $|\lambda_1| = |\lambda_2| = 0$.

If $A$ is invertible, we may assume without loss of generality that $\det A = 1$ and $\re(\tr A) \ge 0$.  If either $\lambda_1/\lambda_2 < 0$ or $|\lambda_1| = |\lambda_2|$, then Theorem \ref{thm:imagTr} and Corollary \ref{cor:sameAbsVal} imply that the boundary of $\NNR(A)$ is an ellipse. Suppose now that $\re(\tr A) > 0$ and $|\lambda_1| \ne |\lambda_2|$.  If the boundary of $\NNR(A)$ is an ellipse, then by Theorem \ref{thm:symmetry}, the major and minor axes of that ellipse must be parallel to the real and imaginary axes, although in which order is not yet clear.  Also, the center of this ellipse must lie on the real axis.

Let $z_0 = x_0+y_0 i$ denote the rightmost point in $\NNR(A)$. By the above comments, $\im(z_0) = y_0 = 0$. Since $z_0 \in \NNR(A)$, Proposition \ref{prop:key} implies that $z_0 \in E(t_0)$ for some $\sigma_1 \le t_0 \le \sigma_2$. Since $E(t_0) \subset \NNR(A)$, $z_0$ must be the rightmost point in $E(t_0)$ as well. The ellipse $E(t_0)$ is oriented with a vertical major axis and horizontal minor axis, so if $z_0$ is the rightmost point, then the center of $E(t_0)$ must lie on the real axis.

By completing the squares on both the $x$ and $y$ variables in \eqref{eq:P}, we can derive formulas for the centers and radii of the family of ellipses $E(t) := \{x+iy : P(x,y,t) = 0\}$.  They are $x(t) + i y(t)$ where
\begin{equation} \label{eq:horizCenter}
x(t) = \frac{\re(\tr A)(t+t^{-1})}{\tr(A^*A)+2},
\end{equation}
\begin{equation} \label{eq:vertCenter}
y(t) = \frac{\im(\tr A)(t-t^{-1})}{\tr(A^*A)-2}.
\end{equation}

By \eqref{eq:vertCenter}, the center of $E(t_0)$ can only lie on the real axis if either $t_0=1$, or $\im(\tr A) = 0$.  In the later case, the boundary of $\NNR(A)$ is given by two different elliptical curves by Theorem \ref{thm:twoEllipses}, and therefore cannot be a single ellipse.  We conclude that $t_0 =1$, and we have $z_0 \in E(1)$.  Similar arguments show that the leftmost, topmost and bottommost
points of $F_N(A)$ must also be contained in $E(1)$, and therefore if $\NNR(A)$ is an ellipse, then it must equal $E(1)$.  We will show now that this cannot happen.

Using the method of Lagrange multipliers, we seek to find the points on $E(1)$ with maximal absolute value.  Equivalently, we seek to maximize $x^2+y^2$ subject to the constraint $P(x,y,1) = 0$.  We must have
$$\nabla(x^2+y^2) = \lambda \nabla P(x,y,1)$$
for some $\lambda \in \R$.  This is equivalent to
\begin{align*}
0 &= (-y,x) \cdot P(x,y,1) \\
 &= (-y,x) \cdot \nabla (b_1x^2 + b_2 y^2 + 2b_3 x + 2 + b_6) \\
 &= (-y,x) \cdot (2b_1x + 2b_3, 2 b_2 y)  \\
 &= 2 y((b_2 -b_1)x - b_3) \\
 &= 2 y (-4x + 2 \re(\tr A)).
\end{align*}
By inspection, the minimum absolute values are attained when $y =0$, and the maximum absolute values are attained when $x = \frac{1}{2} \re(\tr A)$.  We know, however, that the maximum absolute value of points in $\NNR(A)$ must be one, and that maximum occurs at the points $\lambda_1/|\lambda_1|$ and $\lambda_2/|\lambda_2|$ by Proposition \ref{prop:basics}.  For convenience, assume that $\lambda_1 = R e^{i\theta}$ with $R = |\lambda_1|$ and $\theta = \arg\lambda_1$. Then $\lambda_2 = R^{-1} e^{-i \theta}$, and $\frac{1}{2} \re(\tr A) = \frac{1}{2} (R+R^{-1}) \cos \theta$, while $\re (\lambda_1/|\lambda_1|) = \re( \lambda_2/|\lambda_2|) = \cos \theta$.  So the points on $E(1)$ with maximum absolute value correspond to $\lambda_1/\abs{\lambda_1}$ and $\lambda_2/\abs{\lambda_2}$ if and only if $R = R^{-1} =1$, and we have assumed that this is not the case since $|\lambda_1| \ne |\lambda_2|$.  This completes the proof that the only cases where the boundary of $\NNR(A)$ is an ellipse are when either $\lambda_1/\lambda_2 < 0$ or $|\lambda_1| = |\lambda_2|$.
\end{proof}

The special cases above suggest that many 2-by-2 matrices have fairly simple normalized numerical ranges.  For 2-by-2 matrices that do not fit any of these cases, it is still possible to find a polynomial equation for the boundary of the normalized numerical range.

\begin{theorem} \label{thm:resultant}
For any $A \in M_2(\C) \backslash \{0\}$, the boundary of $\NNR(A)$ satisfies a polynomial equation of degree at most 8.
\end{theorem}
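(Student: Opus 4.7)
\medskip

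The plan is to realize $\partial \NNR(A)$ as a subset of the envelope of the one-parameter family of ellipses from Proposition \ref{prop:key}, and then to bound the degree of the polynomial defining that envelope.

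By Proposition \ref{prop:key}, $\NNR(A)$ is the union of the ellipses $E(t) = \{(x, y) : P(x, y, t) = 0\}$ for $t \in [\sigma_1, \sigma_2] \setminus \{0\}$, where $P(x, y, t) = c_4 t^4 + c_3 t^3 + c_2 t^2 + c_1 t + c_0$ is monic in $t$ (since $c_4 = a_3 = 1$) with coefficients $c_0, c_1, c_2, c_3$ of respective $(x, y)$-degrees $0, 1, 2, 1$. Standard envelope theory for such a one-parameter family tells us that the boundary of $\bigcup_t E(t)$ lies in the set where $P$ and $\partial_t P$ vanish simultaneously, i.e., where $P(x, y, t)$ has a multiple root in $t$. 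Eliminating $t$ produces the discriminant $\Delta(x, y) := \operatorname{disc}_t P(x, y, t) = \operatorname{Res}_t(P, \partial_t P)$, and one then has $\partial \NNR(A) \subseteq \{(x, y) : \Delta(x, y) = 0\}$.

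The crucial step is bounding $\deg_{(x, y)} \Delta$. The discriminant of the generic monic quartic $t^4 + pt^3 + qt^2 + rt + s$, viewed as a polynomial in $p, q, r, s$, is isobaric of weight $12$ under the grading that assigns $p, q, r, s$ the weights $1, 2, 3, 4$; equivalently, every monomial $p^a q^b r^c s^d$ appearing in it satisfies $a + 2b + 3c + 4d = 12$. Under the substitution $(p, q, r, s) = (c_3, c_2, c_1, c_0)$ of $(x, y)$-degrees $1, 2, 1, 0$, such a monomial has $(x, y)$-degree $a + 2b + c = 12 - 2c - 4d$. I would then inspect the classical closed-form expansion of the quartic discriminant (sixteen monomials) and verify that $a + 2b + c \le 8$ holds for every term, with equality attained only at $q^4 s$, $p^2 q^3 s$, $q^3 r^2$, and $p^2 q^2 r^2$. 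This gives $\deg_{(x, y)} \Delta \le 8$.

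Two items then remain. First, the limit points arising from the degenerate ellipses $E(\sigma_1), E(\sigma_2)$ correspond to double roots of $P$ at $t = \sigma_j$ and so automatically satisfy $\Delta = 0$, so they require no separate treatment. Second, in the singular case $\det A = 0$, Lemma \ref{lem:ellipsoid} gives $a_{10} = a_7 = a_8 = 0$, hence $c_0 \equiv c_1 \equiv 0$; this forces $\Delta \equiv 0$ and renders the envelope approach vacuous. Here, however, Theorem \ref{thm:twoEllipses} already expresses $\partial \NNR(A)$ as the union of at most two elliptical arcs, supplying a polynomial equation of degree at most $4$. The main obstacle is the sharp degree bound of $8$ in the invertible case: the weighted-degree argument only yields $12$ a priori, and the drop to $8$ requires the case-by-case check of the monomials actually appearing in the quartic discriminant. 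A cleaner alternative, if one wishes to avoid invoking the explicit formula, is to expand the $7 \times 7$ Sylvester determinant for $\operatorname{Res}_t(P, \partial_t P)$ and analyze its nonzero entries row by row; the combinatorics of admissible permutations yields the same bound of $8$.
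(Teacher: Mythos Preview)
Your approach is essentially the paper's: both pass to the envelope of the family $E(t)$, eliminate $t$ via the resultant of $P$ and $\partial_t P$, and read off the degree bound from the Sylvester determinant. You go further than the paper in two respects: you justify the bound $8$ explicitly via the isobaric structure of the quartic discriminant (the paper simply asserts it ``by direct computation''), and you correctly observe that in the singular case $c_0\equiv c_1\equiv 0$ forces the resultant to vanish identically, so a separate appeal to Theorem~\ref{thm:twoEllipses} is needed---a point the paper's proof glosses over.

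One small correction: your claim that the degenerate ellipses $E(\sigma_j)$ ``correspond to double roots of $P$ at $t=\sigma_j$'' is not right. The curve $t\mapsto(xt,yt,t^2)$ meets the ellipsoid $\DW(A)$ transversally at its top and bottom points (the normal to the ellipsoid there is vertical, while the curve's tangent $(x,y,2\sigma_j)$ has nonzero third component), so $t=\sigma_j$ is a \emph{simple} root of $P(x_j,y_j,\cdot)$. This does not damage your argument, however, because those two points lie in the \emph{interior} of $\NNR(A)$: since $h|_{\DW(A)}$ is a local diffeomorphism near the top and bottom of the ellipsoid, their images cannot be boundary points and require no separate treatment at all.
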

\begin{proof}
Let $P(x,y,t)$ be as in \eqref{eq:P}. Any pair $(x,y)$ corresponding to $x+iy$ on the boundary of $\NNR(A)$ is contained in the envelope of $P$, therefore it satisfies
$$P(x,y,t) = \frac{\partial}{\partial t} P(x,y,t) = 0$$
for some $t$.  We can remove the variable $t$ from these two equations using the resultant of both polynomials (see \cite[Appendix 1]{Fischer} for details).  Then any pair $(x,y)$ corresponding to a point on the boundary of $\NNR(A)$ satisfies the polynomial equation
$$R(x,y):=\operatorname{resultant}\left(P(x,y,t),\frac{\partial}{\partial t} P(x,y,t), t\right) = 0.$$
This resultant is given by the determinant of the Sylvester matrix:
$$\begin{vmatrix}
c_0 & 0   & 0   & c_1   & 0     & 0     & 0 \\
c_1 & c_0 & 0   & 2 c_2 & c_1   & 0     & 0 \\
c_2 & c_1 & c_0 & 3 c_3 & 2 c_2 & c_1   & 0 \\
c_3 & c_2 & c_1 & 4 c_4 & 3 c_3 & 2 c_2 & c_1 \\
c_4 & c_3 & c_2 & 0     & 4 c_4 & 3 c_3 & 2 c_2 \\
0   & c_4 & c_3 & 0     & 0     & 4 c_4 & 3 c_3 \\
0   & 0   & c_4 & 0     & 0     & 0     & 4 c_4 \\
\end{vmatrix}$$
where $c_j = c_j(x,y)$ come from \eqref{eq:P}. Note that $c_1$ and $c_3$ are first degree polynomials in $x$ and $y$, $c_2$ is a second degree polynomial, and $c_0$ and $c_4$ are constants. By direct computation, the resultant above is at most an eighth degree polynomial in $x$ and $y$, so the theorem follows.
\end{proof}

\begin{remark}
The resultant equation in the proof of Theorem \ref{thm:resultant} for the boundary of the normalized numerical range of a matrix $A \in M_2(\C)$ gives the same equation for both $A$ and $-A$.  Therefore the solution set of the resultant equation contains the boundaries of both $\NNR(A)$ and $\NNR(-A)$.  One might wonder if the equation can be factored into two separate polynomial equations of degree at most 4 that represent the two different boundaries.  This is not possible in general, however. For example, consider the matrix
$$A = \begin{bmatrix}
2+2i & 1 \\ 0 & 1/(2+2i)
\end{bmatrix}.$$
The normalized numerical range of this matrix is shown in Figure \ref{fig:limabean}. The real roots of the resolvent equation $R(x,y) = 0$ correspond to the boundary of $\NNR(A)$ as well as the boundary of $\NNR(-A)$.  Let us suppose that the boundary of $\NNR(A)$ corresponds to a single fourth degree polynomial $Q(x,y) \in \C[x,y]$. Then the boundary of $\NNR(-A)$ would be given by the equation $Q(-x,-y)$ so $R(x,y) = Q(x,y)Q(-x,-y)$ (up to a possible scalar factor).

Since the normalized numerical range of a 2-by-2 matrix with positive determinant is symmetric across the real axis by Theorem \ref{thm:symmetry}, it follows that the real roots of $Q(c,y)$ must have this symmetry, for all constants $c \in \R$. In particular, there is an interval of values of $c$ such that $Q(c,y)$ has four real roots, as a polynomial in $y$.  Therefore, $Q(c,y)$ is a polynomial in $y$ with only even powered terms for all $c \in \R$.  In other words,
$$Q(x,y) = a_0(x) + a_2(x) y^2 + a_4(x) y^4$$
where $a_0, a_2$, and $a_4$ are polynomials in $x$ of degree at most 4.  In particular, $Q(x,-y) = Q(x,y)$ for all $x,y \in \C$.  It follows that $R(0,y) = Q(0,y) Q(0,-y) = (Q(0,y))^2$.  However, computing the value of $R(0,y)$ for this particular matrix $A$ gives
$$R(0,y) = \left(65 y^{2} + 16\right)^{2} \left(3249 y^{4} + 400 y^{2} + 576\right) /1024,$$
which is not a perfect square in the polynomial ring $\C[y]$.  Therefore, we cannot hope to express the boundary of $\NNR(A)$ using a polynomial of degree lower than 8.
\end{remark}

\begin{figure}[ht]
\begin{center}
\includegraphics[scale=0.4]{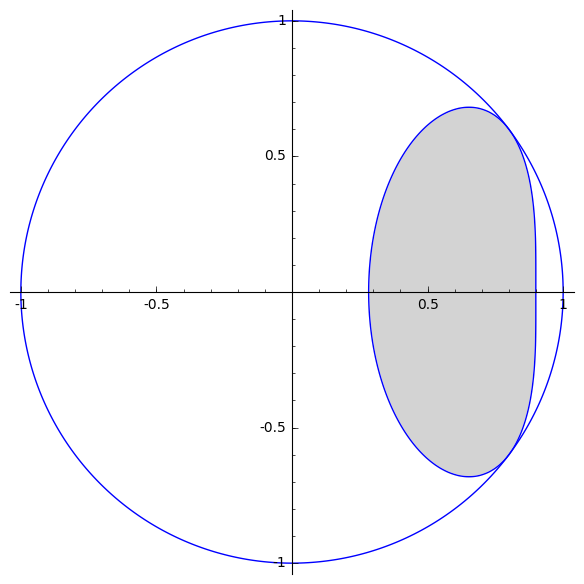}
\end{center}
\caption{The normalized numerical range of $A = [\protect\begin{smallmatrix} 4+3i & 3 \\ 0 & 9/(4+3i) \protect\end{smallmatrix}]$.}
\label{fig:convex}
\end{figure}

\begin{figure}[ht]
\begin{center}
\includegraphics[scale=0.4]{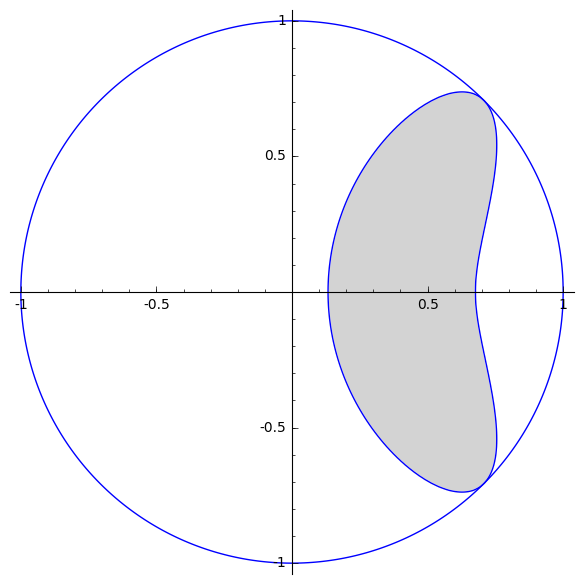}
\end{center}
\caption{For the matrix $A = [\protect\begin{smallmatrix} 2+2i & 1 \\ 0 & 1/(2+2i) \protect\end{smallmatrix}],$ $\NNR(A)$ is not convex, but the boundary is differentiable.} \label{fig:limabean}
\end{figure}

\begin{figure}[ht]
\begin{center}
\includegraphics[scale=0.4]{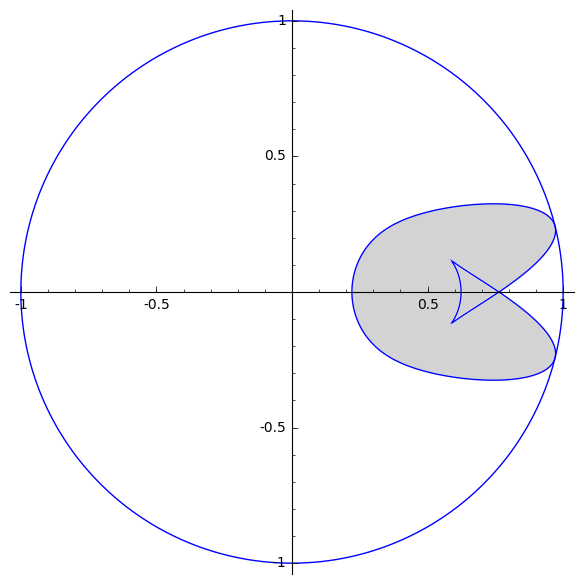}
\end{center}
\caption{For $A = [\protect\begin{smallmatrix} 4+i & 1\\ 0 & 1/(4+i) \protect\end{smallmatrix}]$, the boundary of $\NNR(A)$ is not differentiable at one point. The curves in the interior correspond to extraneous solutions of the polynomial equation for the boundary.}
\label{fig:sharp}
\end{figure}

We end this section with some examples of possible shapes of normalized numerical ranges of 2-by-2 matrices.  Figure \ref{fig:convex} is a typical convex example that is not an ellipse. Figure \ref{fig:limabean} is not convex, but has a smooth boundary.  Finally, the example in Figure \ref{fig:sharp} has a boundary that is not smooth at one point. All three of these examples have boundaries that satisfy irreducible 8th degree polynomial equations.

\section{Normal Case}\label{s:nor}

If $A$ is normal, then $\JNR(A)$ is the convex hull of the set $\{(\re \lambda_i, \im \lambda_i, |\lambda_i|^2) : \lambda_i \in \sigma(A)\}$ according to Lemma~\ref{l:DW}(a). In this section we proceed by classifying the normalized numerical ranges of normal matrices according to the dimension of the Davis-Wielandt shell. Recall \cite{Rock97} that the dimension of a convex set in $\R^n$ is the dimension of the smallest affine space containing the set.  For any $2$-by-$2$ normal matrix $A$, $\DW(A)$ is a line segment, so $\dim \DW(A) \le 1$. In fact, the condition $\dim \DW(A) \le 1$ holds if and only if $A$ is normal, with at most two distinct eigenvalues. In that case $\NNR(A)$ is a hyperbolic arc, as for $n=2$ was established in \cite[Proposition 2.1]{Gev11}, and observed to be valid for arbitrary $n$ in \cite[Theorem 5.6]{SpiSto}.

\begin{lemma} \label{lem:arcs}
Let $A \in M_n(\C) \backslash \{0\}$ be normal with at most two distinct eigenvalues $\lambda_1$ and $\lambda_2$.  If $A$ is invertible, then $\NNR(A)$ is the arc of a hyperbola centered at the origin, having endpoints $\lambda_1/|\lambda_1|$ and $\lambda_2/|\lambda_2|$, and with vertex $\frac{2 e^{i \theta} \cos \phi \sqrt{|\lambda_1 \lambda_2|}}{|\lambda_1|+|\lambda_2|}$ where $\theta = \frac{1}{2}\arg (\lambda_1 \lambda_2)$ and $\phi = \frac{1}{2} \arg(\lambda_2/\lambda_1)$. If $A$ is singular, $\NNR(A)$ is the line segment $(0,\lambda_1/|\lambda_1|]$ where $\lambda_1$ is the single non-zero eigenvalue of $A$.
\end{lemma}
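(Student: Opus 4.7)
My plan is to parametrize $\NNR(A)$ directly using the decomposition $f_{N,A}=h\circ g$. By Lemma~\ref{l:DW}(a), $\DW(A)=\conv\{P_1,P_2\}$ with $P_i=(\re\lambda_i,\im\lambda_i,|\lambda_i|^2)$; this is a line segment, and being of codimension two in $\R^3$ it coincides with its own boundary, so Proposition~\ref{prop:boundary} gives $\NNR(A)=h(\DW(A))$. Setting $v(s)=(1-s)P_2+sP_1$, $s\in[0,1]$, I would then analyze $h(v(s))$ in three cases: singular, invertible with equal moduli, and invertible with distinct moduli.

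The singular case (say $\lambda_2=0$) is immediate: $v_3(s)=s|\lambda_1|^2$ forces $s>0$, and $h(v(s))=\sqrt{s}\,\lambda_1/|\lambda_1|$ traces exactly the half-open segment claimed. The equal-moduli case $|\lambda_1|=|\lambda_2|$ makes $v_3(s)$ constant, so $h(v(s))$ is the chord between $\lambda_1/|\lambda_1|$ and $\lambda_2/|\lambda_2|$; this matches the statement as a degenerate hyperbolic arc whose vertex collapses to the midpoint $\tfrac{1}{2}(\lambda_1/|\lambda_1|+\lambda_2/|\lambda_2|)=e^{i\theta}\cos\phi$.

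For the generic invertible case $|\lambda_1|\neq|\lambda_2|$, the linear dependence of $v_1+iv_2$ and $v_3$ on $s$ yields $v_1+iv_2=\gamma v_3+\delta$ with
\[
\gamma=\frac{\lambda_1-\lambda_2}{|\lambda_1|^2-|\lambda_2|^2},\qquad \delta=\frac{\lambda_1\lambda_2(\bar\lambda_1-\bar\lambda_2)}{|\lambda_1|^2-|\lambda_2|^2}.
\]
Substituting $t=\sqrt{v_3}$, which ranges over the interval with endpoints $|\lambda_1|$ and $|\lambda_2|$, turns $h(v(s))$ into $z(t)=\gamma t+\delta/t$. The substitution $\tau=t\sqrt{|\gamma/\delta|}$ rewrites this as $\sqrt{|\gamma\delta|}\,e^{i\mu}(\tau e^{i\nu}+\tau^{-1}e^{-i\nu})$, where $\mu=\tfrac{1}{2}(\arg\gamma+\arg\delta)$ and $\nu=\tfrac{1}{2}(\arg\gamma-\arg\delta)$; the real and imaginary parts $X,Y$ of $\tau e^{i\nu}+\tau^{-1}e^{-i\nu}$ satisfy $X^2/\cos^2\nu-Y^2/\sin^2\nu=4$, exhibiting the image as one branch of a hyperbola centered at the origin.

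It remains to match geometric parameters to those in the statement. The endpoints are automatic since $z(|\lambda_i|)=h(P_i)=\lambda_i/|\lambda_i|$. For the vertex, minimizing $|z(t)|^2=|\gamma|^2t^2+|\delta|^2 t^{-2}+2\re(\gamma\bar\delta)$ gives $t^\ast=\sqrt{|\delta/\gamma|}=\sqrt{|\lambda_1\lambda_2|}$, which lies in the required interval by AM--GM, and $z(t^\ast)=2\sqrt{|\gamma\delta|}\,e^{i\mu}\cos\nu$. Direct computation shows $\arg\gamma+\arg\delta=\arg(\lambda_1\lambda_2)$, so $\mu=\theta$; writing $\lambda_j=r_je^{i\alpha_j}$ and factoring $e^{i\theta}$ out of $\lambda_1-\lambda_2$ yields $\cos\nu\cdot|\lambda_1-\lambda_2|=(|\lambda_1|-|\lambda_2|)\cos\phi$, which combined with $|\gamma\delta|=|\lambda_1-\lambda_2|^2|\lambda_1\lambda_2|/(|\lambda_1|^2-|\lambda_2|^2)^2$ produces the stated vertex $\tfrac{2e^{i\theta}\cos\phi\sqrt{|\lambda_1\lambda_2|}}{|\lambda_1|+|\lambda_2|}$. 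The main obstacle here is the bookkeeping: careful tracking of arguments, signs of $|\lambda_1|-|\lambda_2|$, and the limiting behaviour as $|\lambda_1|\to|\lambda_2|$ to confirm continuity with the degenerate chord case.
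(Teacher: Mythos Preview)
Your argument is correct and complete. The paper does not give a full proof of this lemma: it cites \cite[Proposition~2.1]{Gev11} and \cite[Theorem~5.6]{SpiSto} for the result, and then sketches an alternative derivation via Proposition~\ref{prop:key} by scaling so that $\det A=1$, observing that for a normal $2\times 2$ matrix the ellipses $E(t)$ collapse to single points whose centers are given by \eqref{eq:horizCenter}--\eqref{eq:vertCenter}, and reading off the vertex from \eqref{eq:horizCenter} at $t=1$. Your route is genuinely different: you bypass the ellipsoid equation of Lemma~\ref{lem:ellipsoid} and Proposition~\ref{prop:key} entirely, working instead directly from Lemma~\ref{l:DW}(a) and the map $h$. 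In effect you are carrying out explicitly, and with full bookkeeping, the computation hinted at in the proof of Lemma~\ref{lem:hyperconvex}: a line segment in $\DW(A)$ maps under $h$ to $t\mapsto\gamma t+\delta t^{-1}$, which is a hyperbolic branch. What you gain is a self-contained argument valid for all $n$ at once (the paper's Proposition~\ref{prop:key} route is literally a $2\times 2$ statement, with the passage to general $n$ left implicit via the fact that $\DW(A)$ depends only on $\sigma(A)$). What the paper's route buys is economy once Section~\ref{s:2b2} is in place: the vertex formula drops out of \eqref{eq:horizCenter} with no separate minimization. Your acknowledged sign issue in the line $\cos\nu\cdot|\lambda_1-\lambda_2|=(|\lambda_1|-|\lambda_2|)\cos\phi$ is real but harmless---a direct evaluation of $z(t^\ast)=\gamma\sqrt{|\lambda_1\lambda_2|}+\delta/\sqrt{|\lambda_1\lambda_2|}$ using $\lambda_1-\lambda_2=e^{i\theta}\bigl((r_1-r_2)\cos\phi-i(r_1+r_2)\sin\phi\bigr)$ shows the factor $(r_1-r_2)/(r_1^2-r_2^2)=1/(r_1+r_2)$ absorbs the sign cleanly.
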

Note that the case $\lambda_1=\lambda_2=0$ is trivial since then $A$, being normal, is the zero matrix, and thus $F_N(A)=\varnothing$.

Lemma \ref{lem:arcs} can also be derived from Proposition \ref{prop:key} by first scaling $A$ so that $\det A = 1$ (if $A$ is invertible), and noting that the ellipses $E(t)$ all degenerate to single points.  The centers of these points are given by \eqref{eq:horizCenter} and \eqref{eq:vertCenter}. The equation for the vertex can be derived from \eqref{eq:horizCenter} by letting $t = 1$ and rotating.

The hyperbolic arcs described in Lemma \ref{lem:arcs} only depend on the two eigenvalues of the normal matrix $A$. For convenience, we let $H(\lambda, \mu)$ denote the hyperbolic arc corresponding to a normal matrix with two distinct eigenvalues $\lambda$ and $\mu$.

In the following theorem, we completely describe the normalized numerical range of any $n$-by-$n$ normal matrix $A$ with the property that $\dim \DW(A) = 2$.      This class includes all normal matrices with three distinct eigenvalues, thus covering the case $n=3$. The latter happens to be a {\em leading special case} in Polya's terminology, to which the case of arbitrary $n$ will be reduced in the final result of this section.

When $\dim \DW(A) = 2$, there is a 2-dimensional affine subspace $V \subset \R^3$ such that $\DW(A) \subset V$. Note that this plane is vertical if and only if $A$ is {\em essentially Hermitian}, i.e.,
is normal with a collinear spectrum, and it is horizontal if and only if $A$ is a scalar multiple of a unitary matrix with at least three distinct eigenvalues.

Let us write the equation of $V$ in the form
\begin{equation} \label{eq:plane}
(\eta^T v =)\ \eta_1 v_1 + \eta_2 v_2 + \eta_3 v_3 = b
\end{equation}
where $b \in \R$ is a constant, and $\eta \in \R^3 \backslash \{ 0 \}$ is a normal vector to the plane. If $0 \in V$, then $b = 0$.  In that case $V$ is the linear span of the set $\{(\re \lambda_i, \im \lambda_i, |\lambda_i|^2): \lambda_i \in \sigma(A) \}$ which a subspace of $\R^3$.

Let $Jh(v)$ denote the Jacobian derivative of the map $h$ in \eqref{eq:h} at $v \in \R^3$ (with $v_3 > 0$). We compute
$$Jh(v) = \begin{bmatrix}
v_3^{-1/2} & 0 & -\frac{1}{2}v_1 v_3^{-3/2} \\
0 & v_3^{-1/2} & -\frac{1}{2}v_2 v_3^{-3/2} \\
\end{bmatrix}.
$$
Note that $\ker(Jh(v)) = \Span \{(v_1,v_2,2v_3)\}$.  For any $w \in \ker(Jh(v))$, the derivative of $h$ in the direction $w$ is zero. If $v+w \in V$, then $h|_V$ has a critical point at $v$.  Of course, $v+w \in V$ if and only if $w \perp \eta$.  So $h|_V$ has a critical point at $v$ if and only if
$$(\eta_1,\eta_2,\eta_3) \cdot (v_1,v_2,2v_3) = 0,$$
or equivalently by \eqref{eq:plane},
\eq{cv} b + v_3 \eta_3 = 0.\en
We will refer to the solution $v_3:=\alpha$ of \eqref{cv} (if it exists) as the \emph{critical level} of $h|_V$. This $\alpha$ plays a crucial role in the shape of $F_N(A)$.

To express the coefficients of \eqref{cv} in terms of the spectrum of $A$, pick any three distinct eigenvalues of $A$. Without loss of generality, let them be $\lambda_1, \lambda_2, \lambda_3$.
According to \eqref{eq:plane},
$$\begin{bmatrix}
\re \lambda_1 & \im \lambda_1 & |\lambda_1|^2 \\
\re \lambda_2 & \im \lambda_2 & |\lambda_2|^2 \\
\re \lambda_3 & \im \lambda_3 & |\lambda_3|^2 \\
\end{bmatrix} \,
\begin{bmatrix}
\eta_1 \\ \eta_2 \\ \eta_3
\end{bmatrix} =
\begin{bmatrix}
b \\ b \\ b
\end{bmatrix} ,
$$
and by Cramer's rule,
$$\eta_3 = b \begin{vmatrix}
\re \lambda_1 & \im \lambda_1 & 1 \\
\re \lambda_2 & \im \lambda_2 & 1 \\
\re \lambda_3 & \im \lambda_3 & 1 \\
\end{vmatrix} \left/ \begin{vmatrix}
\re \lambda_1 & \im \lambda_1 & |\lambda_1|^2 \\
\re \lambda_2 & \im \lambda_2 & |\lambda_2|^2 \\
\re \lambda_3 & \im \lambda_3 & |\lambda_3|^2 \\
\end{vmatrix} \right.$$

\noindent The bottom determinant is non-zero. Indeed, no three points $(\re \lambda_i,\im \lambda_i, \abs{\lambda_i}^2)$ with $\lambda_i \in \sigma(A)$ are colinear by Lemma \ref{l:DW}(a) and therefore they cannot be contained in a proper subspace of $\R^3$. From this, we derive the following formula for the critical level $\alpha$ of $h|_V$ when $b,\eta_3 \ne 0$.
\begin{equation} \label{eq:critlevel}
\alpha = -\begin{vmatrix}
\re \lambda_1 & \im \lambda_1 & |\lambda_1|^2 \\
\re \lambda_2 & \im \lambda_2 & |\lambda_2|^2 \\
\re \lambda_3 & \im \lambda_3 & |\lambda_3|^2 \\
\end{vmatrix} \left/ \begin{vmatrix}
\re \lambda_1 & \im \lambda_1 & 1 \\
\re \lambda_2 & \im \lambda_2 & 1 \\
\re \lambda_3 & \im \lambda_3 & 1 \\
\end{vmatrix} \right..
\end{equation}
If $b=0$ then $\alpha$ is undetermined, while when $b\neq 0$, $\eta_3 = 0$ it does not exist, and there is no critical level.

For all normal matrices with such flat Davis-Wielandt shells, we have the following description of the normalized numerical range.

\begin{theorem} \label{thm:flatnormal}
Suppose that $A \in M_n(\C)$ is normal with $m$ distinct eigenvalues $\lambda_1, \ldots, \lambda_m$ and $\dim \DW(A) = 2$. Order the eigenvalues in such a way that the edges of $\DW(A)$ connect $(\re\lambda_i,\im \lambda_i,\abs{\lambda_i}^2)$ to $(\re\lambda_{i+1},\im\lambda_{i+1},\abs{\lambda_{i+1}}^2)$ for $1 \le i \le m$ with the convention that $\lambda_{m+1}$ is identified with $\lambda_1$. Then $\NNR(A)$ is the set enclosed by the hyperbolic arcs $H(\lambda_i,\lambda_{i+1})$, $1 \le i \le m$, and possibly by one line segment that connects the points of tangency for a line bitangent to (at least) two of the hyperbolic arcs $H(\lambda_i,\lambda_{i+1})$. Such a flat portion of $\partial F_N(A)$ occurs if and only if the critical level $\alpha$ in \eqref{eq:critlevel} satisfies
\begin{equation} \label{eq:flatcondition}
\min_i |\lambda_i|^2 < \alpha < \max_i |\lambda_i|^2.
\end{equation}
In that case, the flat portion is the image of the set $\{ v \in \DW(A) : v_3 = \alpha \}$.
\end{theorem}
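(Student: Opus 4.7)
The plan is to compute $F_N(A) = h(\DW(A))$ directly, exploiting the fact that $\DW(A)$ is a convex polygon inside the $2$-dimensional affine plane $V$ given by \eqref{eq:plane}, with extreme points $V_i = (\re\lambda_i,\im\lambda_i,|\lambda_i|^2)$ by Lemma~\ref{l:DW}(a). The boundary of this polygon inside $V$ is the cyclic union of edges $E_i = [V_i, V_{i+1}]$. Each such edge is itself the Davis-Wielandt shell of the $2\times 2$ diagonal normal matrix with eigenvalues $\lambda_i, \lambda_{i+1}$, so Lemma~\ref{lem:arcs} immediately gives $h(E_i) = H(\lambda_i, \lambda_{i+1})$. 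This accounts for the ``hyperbolic arcs'' part of the conclusion.

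Next I would study $h|_V : V \to \C$ as a map between two real $2$-dimensional spaces. A point $v \in V$ is critical exactly when $\ker Jh(v) = \Span\{(v_1,v_2,2v_3)\}$ lies in the tangent space $\eta^\perp$ of $V$; combined with $\eta\cdot v = b$ this forces $v_3 = -b/\eta_3 = \alpha$, so the critical set of $h|_V$ is the line $\gamma := V \cap \{v_3 = \alpha\}$ (when $\eta_3 \neq 0$). Restricted to the horizontal slice $\{v_3 = \alpha\}$, $h$ is the affine map $(v_1,v_2) \mapsto \alpha^{-1/2}(v_1 + iv_2)$, so $h(\gamma)$ is a straight line $L \subset \C$. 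Off $\gamma$, the inverse function theorem guarantees $h|_V$ is a local diffeomorphism, so every boundary point of $h(\DW(A))$ must be the image of either a point of $\partial \DW(A) = \bigcup E_i$ or a critical point contained in $\DW(A)$.

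The chord $\gamma \cap \DW(A)$ is nondegenerate precisely when $\alpha$ lies strictly between the extremal third coordinates of the $V_i$'s, which are the $|\lambda_i|^2$; this is exactly \eqref{eq:flatcondition}. When it holds, the chord exits the polygon through two edges $E_i, E_j$, and at each exit point $v_0$ the criticality of $h|_V$ forces the image of $Jh(v_0)|_{T_{v_0}V}$ to be $1$-dimensional. This common image direction is simultaneously the tangent of $h(E_i) = H(\lambda_i, \lambda_{i+1})$ and of $L$ at $h(v_0)$, which yields the bitangency claim; the flat portion is then $h(\gamma \cap \DW(A))$, exactly as stated. Conversely, if \eqref{eq:flatcondition} fails then either $\gamma$ misses $\DW(A)$ or meets it in a single point, and no flat segment appears.

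The hard part will be the cleanup: verifying that the boundary really is exhausted by the arcs $H(\lambda_i,\lambda_{i+1})$ together with (when present) the segment $L \cap h(\DW(A))$, and that $F_N(A)$ fills the enclosed region. I would argue this by observing that $\gamma$ partitions $\DW(A)$ into two (or, when \eqref{eq:flatcondition} fails, one) subpolygons, each of which $h$ maps homeomorphically onto a region bounded by a consecutive string of arcs and, if present, the line segment $L \cap h(\DW(A))$. Separate checks are needed for the degenerate cases where \eqref{eq:critlevel} does not apply, namely $\eta_3 = 0$ (essentially Hermitian, no critical level) and $V$ horizontal (scalar multiple of a unitary, $h|_V$ globally affine); in both the conclusion reduces cleanly to the configuration of hyperbolic arcs alone.
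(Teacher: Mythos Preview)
Your proposal is correct and follows essentially the same route as the paper: identify $\DW(A)$ as a polygon in the plane $V$, recognize edge images as the arcs $H(\lambda_i,\lambda_{i+1})$, locate the critical set of $h|_V$ as the line $\{v_3=\alpha\}$ via the kernel of $Jh$, and read off bitangency from rank deficiency at the chord's exit points. The one place where the paper's execution is sharper is the ``cleanup'' step you flag: rather than arguing abstractly that each subpolygon maps homeomorphically, the paper substitutes $v_1=xt$, $v_2=yt$, $v_3=t^2$ into \eqref{eq:plane} to obtain the family $\eta_1 x+\eta_2 y = bt^{-1}-\eta_3 t$ of parallel lines, from which global injectivity on each side of the critical level (and the half-plane structure of $h(V_+)$) is immediate from the monotonicity of $t\mapsto bt^{-1}-\eta_3 t$ --- this is a cleaner way to upgrade the local diffeomorphism you get from the inverse function theorem to the global statement you need.
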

\begin{proof}
Since $\dim \DW(A) = 2$, $\DW(A)$ is a convex polygon contained in a 2-dimensional affine subspace of $\R^3$. By Lemma \ref{l:DW}(a), each $(\re \lambda_i, \im \lambda_i, |\lambda_i|^2)$ is an extreme point of $\DW(A)$. Therefore $\DW(A)$ has $m$ vertices and $m$ sides. We have assumed that the eigenvalues of $A$ are ordered so that the boundary edges of $\DW(A)$ correspond to pairs of eigenvalues $\lambda_i$ and $\lambda_{i+1}$ (with the convention that $\lambda_{m+1}$ is identified with $\lambda_1$). The image of the edges of $\DW(A)$ under $h$ will be the hyperbolic arcs $H(\lambda_i,\lambda_{i+1})$, $1\le i \le m$.

If $h$ is a bijection from $\JNR(A)$ onto $\NNR(A)$, then by a standard topology argument the boundary of $\JNR(A)$ will map onto the boundary of $\NNR(A)$.  In that case, the arcs $H(\lambda_i, \lambda_{i+1})$, $1 \le i \le m$, will form the boundary of $\NNR(A)$ (possibly missing the point $0 \in \partial \NNR(A)$ if $0 \in \sigma(A)$). It is possible, however, that $h$ is not one-to-one on $\JNR(A)$.

Let $V$ be the 2-dimensional affine subspace of $\R^3$ containing $\DW(A)$. If the Jacobian of $h|_V$ is full rank at $v$, then $h|_V$ has a differentiable inverse in a neighborhood of $v$ by the inverse function theorem.  On the other hand, $h|_V$ may not be invertible in a neighborhood of a critical point.  As observed in the remarks preceding the statement of the theorem, the critical points of $h|_V$ all lie on the line $V \cap \{v \in \R^3 : v_3 = \alpha \}$ where $\alpha$ is the critical level of $h|_V$.

Let $V_+ := \{v \in V: v_3 > 0 \}$ and note that $V_+$ is nonempty because it contains $\JNR(A)$. For any $v \in V_+$ we may substitute $v_1 = xt$, $v_2 = yt$, and $v_3 = t^2$ where $x,y \in \R$ and $t > 0$. With this substitution, $h(v) = x+iy$. Then, by \eqref{eq:plane}, the image of $V_+$ under $h$ is the set of points $x+iy$ in $\C$ such that $(x,y)$ satisfies
$$\eta_1 xt + \eta_2 yt + \eta_3 t^2 = b$$
or equivalently
\begin{equation} \label{eq:parallelLines}
\eta_1 x + \eta_2 y  = b t^{-1} - \eta_3 t
\end{equation}
for some $t > 0$.  Depending on the configuration of the plane $V$, the map $h|_{V_+}$ may be a bijection or not.  We have the following cases.
\begin{enumerate}
\item If $\eta_1 = \eta_2 = 0$, then $V_+ = V$ is an affine plane in $\R^3$ with constant $v_3$.  In that case, $h$ will be a one-to-one affine linear transformation from $V_+$ onto $\C$.
The hyperbolic arcs $H(\lambda_i,\lambda_{i+1})$ degenerate then into the line segments connecting $\lambda_i/\abs{\lambda_i}$ with $\lambda_{i+1}/\abs{\lambda_{i+1}}$.
\item If $\eta_3 = b = 0$, then the image of $V_+$ under $h$ is a line in $\C$ passing through the origin. This line contains $\NNR(A)$, and the theorem is trivially true.
\item If least one of $\eta_1$ or $\eta_2$ are non-zero and at least one of $b$ or $\eta_3$ are non-zero, then \eqref{eq:parallelLines} describes a family of parallel lines in $\C$ indexed by $t > 0$.  The map $h$ is one-to-one on $V_+$ if and only if $b t^{-1} - \eta_3 t$ is one-to-one.
\end{enumerate}

In the last case, if either $\eta_3 = 0$ or the critical level $\alpha = -b/\eta_3 \le 0$, then $h|_{V_+}$ is one-to-one on all of $V_+$. If that is the case, then $h$ is a bijection from $\DW(A)$ onto $\NNR(A)$. In particular, $h$ maps the relative boundary of $\DW(A)$ (i.e., the boundary of $\DW(A)$ in $V$) onto the boundary of $\NNR(A)$, which proves the theorem.

Let us consider what happens when the critical level is positive.  By \eqref{eq:parallelLines}, image of the set $V_\alpha = \{v \in V_+ : v_3 = \alpha\}$ under the map $h$ is a line $L$, while the image of $V_+$ under $h$ is the half-plane in $\C$ with boundary $L$ that does not contain the origin. All points on the boundary have a unique pre-image under $h$, while points in the open half-plane have two distinct pre-images in $V_+$. The boundary of $\NNR(A)$ may consist of the images of the edges of $\JNR(A)$ along with a flat portion that is the image of the set $\{v \in \DW(A) : v_3 = \alpha \}$ under the map $h$.  This flat portion will be a line segment contained in $L$ that connects two of the hyperbolic arcs $H(\lambda_i, \lambda_{i+1})$ and $H(\lambda_j,\lambda_{j+1})$.

Consider any line $\ell \subset V$ such that $\ell$ intersects $V_\alpha$.  Let $v$ denote the point of intersection.  Because the Jacobian of $h|_V$ is rank deficient at $v$, it follows that the tangent lines of the curves $h(\ell)$ and $L = h(V_\alpha)$ are parallel. Of course, $L$ is its own tangent line, so $h(\ell)$ is tangent to $L$ at the point of intersection. If $\ell$ is a line connecting two vertices of $\DW(A)$ and the intersection point $v$ is between the two vertices, then the corresponding hyperbolic arc $H(\lambda_i, \lambda_{i+1})$ will be tangent to $L$. This proves that if $\NNR(A)$ has a flat portion of the form described above, then it will be tangent to (at least) two of the hyperbolic arcs $H(\lambda_i, \lambda_{i+1})$.
\end{proof}

\begin{remark} The case 1. in the proof (characterized by $\eta$ being vertical) materializes exactly when $A$ is a non-zero scalar multiple of a unitary  matrix. It is therefore not surprising at all that in
this case $F_N(A)$ differs from $F(A)$ by a scalar multiple only: $F_N(A)=F(A)/\norm{A}$. The particular relation $F_N(U)=F(U)$ for unitary $U$ was observed already in \cite{Gev04}.

On the other hand, the subcase $\eta_3=0$ of 3. corresponds to essentially Hermitian $A$. The respective description of $F_N(A)$ was obtained in  \cite[Theorem 5.5]{SpiSto}. It is worth mentioning that the approach
of \cite{SpiSto} allowed for infinite-dimensional considerations, and Theorem~5.3 was derived there from a (rather more involved) result for essentially Hermitian operators on Hilbert spaces.
\end{remark}

The main theorem of this section applies to general $n$-by-$n$ normal matrices and is as follows.

\begin{theorem} \label{thm:normalMain}
Let $A \in M_n(\C)$ be normal with $m$ distinct eigenvalues $\lambda_1$, $\ldots$, $\lambda_m$.  Then $\NNR(A)$ is the set enclosed by the hyperbolic arcs $H(\lambda_i,\lambda_j)$, $1 \le i, j \le m$, $i \ne j$, along with any line segments that connect the points of tangency for a line bitangent to two arcs $H(\lambda_i,\lambda_j)$ and $H(\lambda_j,\lambda_k)$ that share an eigenvalue $\lambda_j$.  This set is closed, except possibly at the origin if $A$ is not invertible and $0 \notin \conv \{\lambda_i : \lambda_i \neq 0 \}$.
\end{theorem}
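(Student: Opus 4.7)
The plan is to reduce the general normal case to the flat case already handled by Theorem~\ref{thm:flatnormal}, by exploiting the polytope structure of $\DW(A)$. By Lemma~\ref{l:DW}(a), $\DW(A) = \conv\{P_1,\ldots,P_m\}$ with $P_i := (\re\lambda_i,\im\lambda_i,|\lambda_i|^2)$, each $P_i$ an extreme point, so $\DW(A)$ is a convex polytope of dimension at most $3$. If $\dim \DW(A) \le 2$, the conclusion reduces to Lemma~\ref{lem:arcs} (hyperbolic-arc case) or Theorem~\ref{thm:flatnormal} (planar case), so one may assume $\dim \DW(A) = 3$.

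By Proposition~\ref{prop:boundary}, $F_N(A) = h(\partial\DW(A))$, and $\partial\DW(A)$ decomposes into finitely many $2$-dimensional faces $F_1,\ldots,F_k$, where each $F_j$ is the convex hull of $\{P_i : i \in I_j\}$ for some index set $I_j$ lying in a common supporting plane. Treating $F_j$ as the Davis-Wielandt shell of a normal matrix with spectrum $\{\lambda_i : i \in I_j\}$, one applies Theorem~\ref{thm:flatnormal} to each $F_j$: its image $h(F_j)$ is the planar region bounded by the hyperbolic arcs $H(\lambda_i,\lambda_{i'})$ corresponding to cyclically adjacent pairs along $\partial F_j$, together with at most one bitangent line segment arising when the critical level for the plane of $F_j$ satisfies \eqref{eq:flatcondition}. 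Because two adjacent $2$-faces share an edge $\overline{P_i P_{i'}}$ whose $h$-image is the common arc $H(\lambda_i,\lambda_{i'})$, the regions $h(F_j)$ glue cleanly and $F_N(A) = \bigcup_{j=1}^k h(F_j)$. An arc $H(\lambda_i,\lambda_{i'})$ corresponding to an edge on the outer polytope boundary that separates $F_N(A)$ from the outside contributes to $\partial F_N(A)$, while arcs corresponding to diagonals of faces, or edges shared between two faces lying on the same side of the outer envelope, are interior. A flat portion of $\partial F_N(A)$ is produced face-locally by Theorem~\ref{thm:flatnormal}, so its tangent points lie on two boundary arcs of the same face $F_j$; these arcs emanate from a common face vertex $P_j$ and therefore share the endpoint $\lambda_j/|\lambda_j|$, which is the form asserted.

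For closedness, if $A$ is invertible, the result is immediate from Proposition~\ref{prop:basics}(f). If $0 \in \sigma(A)$, then $P_0 = 0$ is a vertex of $\DW(A)$ at which $h$ is undefined; parametrizing any edge $[0,P_j]$ as $tP_j$, $0 < t \le 1$, yields $h(tP_j) = t^{1/2}\lambda_j/|\lambda_j| \to 0$, so $0$ is always a limit point of $F_N(A)$. Using the spectral decomposition of the normal matrix $A$, one sees that $0 \in F_N(A)$ iff there exist nonnegative weights $t_i$ summing to $1$ with $\sum t_i\lambda_i = 0$ and $\sum t_i|\lambda_i|^2 > 0$, equivalently iff $0 \in \conv\{\lambda_i : \lambda_i \neq 0\}$. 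This is precisely the stated closedness criterion.

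The main technical obstacle lies in the global gluing: since $h$ is not injective on $\DW(A)$, a priori the images $h(F_j)$ may overlap in complicated ways, and one must verify that the outer envelope of all arcs and tangent segments genuinely bounds $F_N(A)$ without spurious internal boundary components. This is handled by the observation that, for any $z \in \C$, the preimage ray $\{(xt,yt,t^2):t>0\}$ meets the convex body $\DW(A)$ in an interval whose extreme value of $t$ lands on $\partial\DW(A)$; thus $\partial F_N(A)$ arises exactly from tangencies of these rays with individual faces, and by Theorem~\ref{thm:flatnormal} such tangencies produce only the arcs $H(\lambda_i,\lambda_{i'})$ along face edges and the bitangent segments described. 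Combined with the simple connectivity of $F_N(A)$ (Proposition~\ref{prop:basics}(g)) and clean face-to-face gluing, this identifies $F_N(A)$ with the claimed region.
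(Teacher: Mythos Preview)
Your overall strategy matches the paper's: reduce to the $2$-dimensional pieces of $\partial\DW(A)$ and invoke the flat case. The paper, however, \emph{triangulates} $\partial\DW(A)$ into triangles, each the Davis--Wielandt shell of a $3$-by-$3$ principal submatrix, whereas you work with the natural $2$-faces. This is not a cosmetic difference. A $2$-face may have four or more vertices (precisely when four or more eigenvalues lie on a common circle in $\C$), and then the line $\{v_3=\alpha\}$ can cross two edges of the face that do \emph{not} share a vertex. Your assertion that the two bitangent arcs ``emanate from a common face vertex $P_j$'' is therefore unjustified: for a pentagonal face, say, the flat portion supplied by Theorem~\ref{thm:flatnormal} may join $H(\lambda_i,\lambda_{i+1})$ to $H(\lambda_{i+3},\lambda_{i+4})$, which share no eigenvalue. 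The paper's triangulation forces every piece to be a triangle, so any two of its three boundary arcs automatically share a vertex, and the ``shared eigenvalue'' form of the bitangent segments is immediate. Equivalently, a long flat segment on a many-sided face decomposes, after triangulating, into a concatenation of short segments each joining arcs that share an eigenvalue.

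A second, smaller difference: to show that a given bitangent segment actually lies in $\NNR(A)$, the paper does not rely on Theorem~\ref{thm:flatnormal} but argues directly via Lemma~\ref{lem:hyperconvex} (which produces a hyperbolic arc in $\NNR(A)$ between the two tangency points, lying on the origin side of the bitangent line) together with simple connectivity. Your face-local invocation of Theorem~\ref{thm:flatnormal} would achieve the same end once you first triangulate.
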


\begin{proof}
Since $A$ is normal, $\JNR(A)$ is the convex hull of $\{(\re(\lambda_i),\im(\lambda_i),|\lambda_i|^2):1 \le i \le m\}$ by Lemma \ref{l:DW}. We may assume without loss of generality that $A$ is diagonal. By Proposition \ref{prop:boundary}, $\NNR(A)$ is the image of the boundary of $\JNR(A)$ under $h$. Since $\DW(A)$ is a convex polytope, its boundary can be triangularized, that is, $\partial DW(A)$ is a union of convex triangles that correspond to Davis-Wielandt shells of principle $3$-by-$3$ submatrices of $A$. Therefore $\NNR(A)$ is the union of a finite collection of normalized numerical ranges of $3$-by-$3$ principle submatrices of $A$. Note also that $0 \in \NNR(A)$ if and only if there is a point $v = (0,0,v_3) \in \JNR(A)$.  That is true if and only if $0 \in \conv \{\lambda_i : \lambda_i \ne 0\}$.

We will now prove that if there is a line that is tangent to two of the hyperbolic arcs $H(\lambda_i, \lambda_j)$ and $H(\lambda_k, \lambda_\ell)$, and if the points of tangency are $z_1$ and $z_2$ respectively, then the line segment connecting $z_1$ and $z_2$ is contained in $\NNR(A)$.

Suppose $A$ is a 3-by-3 principle submatrix of $A$ with three distinct eigenvalues. We may assume without loss of generality that these are $\lambda_1, \lambda_2,$ and $\lambda_3$.  Suppose that the hyperbolic arcs $H(\lambda_1,\lambda_3)$ and $H(\lambda_2, \lambda_3)$ contain points $z_1$ and $z_2$ respectively, such that the line passing through $z_1$ and $z_2$ is tangent to both arcs.  Let us denote this line by $L$.  By Lemma \ref{lem:hyperconvex}, there is an arc of a hyperbola centered at 0 that connects $z_1$ to $z_2$ and is contained in $\NNR(A)$.  Since a line can pass through a hyperbola at most twice, this arc must be contained in the closed half plane with boundary $L$ that contains the origin.  At the same time, since $H(\lambda_1, \lambda_3)$ and $H(\lambda_2,\lambda_3)$ are both tangent to $L$, both of those arcs are contained in the closed half plane with boundary $L$ that does not contained the origin.  It follows that the line segment $[z_1,z_2]$ is completely enclosed by these hyperbolic arcs. Since $\NNR(A)$ is simply connected by Proposition \ref{prop:basics}, $[z_1,z_2]$ is contained in $\NNR(A)$.
\end{proof}
\color{black}

\begin{remark}
If $A \in M_n(\C)$ is normal and $0 \in \NNR(A)$, then the boundary of $A$ will consist solely of hyperbolic arcs $H(\lambda_i, \lambda_j)$ where $\lambda_i, \lambda_j \in \sigma(A)$.  From the proof of Theorem \ref{thm:normalMain}, if two hyperbolic arcs $H(\lambda_i, \lambda_j)$ and $H(\lambda_j, \lambda_k)$ are both tangent to a line $L$, then the line segment in $L$ that connects the two points of tangency must be contained in $\NNR(A)$.  If $0 \in \NNR(A)$, then any ray from $0$ passing through a point on that line segment will also pass through one of the hyperbolic arcs $H(\lambda_i, \lambda_j)$ or $H(\lambda_j, \lambda_k)$.  As noted in \cite[Theoerem 3.3]{SpiSto}, the intersection of $\NNR(A)$ with any ray from the origin is connected. This means that points on $L \cap \NNR(A)$ can only be on the boundary of $\NNR(A)$ if they are contained in one of the hyperbolic arcs $H(\lambda_i, \lambda_j)$. This is also essentially true for any singular normal matrix, although in that case $0$ may also be an additional boundary point.
\end{remark}

\begin{figure}[ht]
\begin{center}
\begin{tikzpicture}
\begin{scope}[scale=0.21]
\clip (-11,-11) rectangle (15,11);
\fill[fill=gray!40, even odd rule] (5.3,-1.836) circle (5.046) (10,20.6) circle (20.6);
\fill[white] (0,0) circle (2.45);
\fill[white] (0,-10) arc (-90:180:10) -- (-11,12) -- (16,12) -- (16,-11) -- cycle;
\draw[<->] (-11,0) -- (11,0);
\draw[<->] (0,-11) -- (0,11);
\draw[dashed] (0,0) circle (10);
\draw[dashed] (0,0) circle (2.45);
\filldraw (1.732,1.732) circle (4pt) node[below,scale=0.75] {$\lambda_1$};
\filldraw (10,0) circle (4pt) node[below left,scale=0.75] {$\lambda_3$};
\filldraw (3,4) circle (4pt) node[above right,scale=0.75] {$\lambda_2$};
\draw[blue] (10,20.6) circle (20.6);
\draw[blue] (5.3,-1.836) circle (5.046);
\end{scope}

\begin{scope}[xshift=5cm,yshift=-2.0cm,scale=3.6]
\clip (-0.1,1.3) rectangle (1.3,-0.1);
\draw[->] (-1,0) -- (1.2,0);
\draw[->] (0,-1) -- (0,1.2);
\draw[blue] (0,0) circle (1);

\shadedraw[draw=blue,left color=gray!70, right color=gray!40] (0.6, 0.8) -- (0.5998, 0.79882) -- (0.5996, 0.79763) -- (0.59941, 0.79645) -- (0.59922, 0.79527) -- (0.59904, 0.79408) -- (0.59886, 0.7929) -- (0.59869, 0.79172) -- (0.59852, 0.79054) -- (0.59836, 0.78936) -- (0.5982, 0.78817) -- (0.59805, 0.78699) -- (0.5979, 0.78581) -- (0.59776, 0.78464) -- (0.59762, 0.78346) -- (0.59749, 0.78228) -- (0.59736, 0.7811) -- (0.59724, 0.77993) -- (0.59712, 0.77875) -- (0.59702, 0.77758) -- (0.59691, 0.7764) -- (0.59682, 0.77523) -- (0.59673, 0.77406) -- (0.59665, 0.77289) -- (0.59657, 0.77172) -- (0.5965, 0.77055) -- (0.59644, 0.76939) -- (0.59638, 0.76822) -- (0.59633, 0.76706) -- (0.59629, 0.7659) -- (0.59626, 0.76474) -- (0.59624, 0.76358) -- (0.59622, 0.76242) -- (0.59621, 0.76127) -- (0.59621, 0.76012) -- (0.59622, 0.75897) -- (0.59623, 0.75782) -- (0.59626, 0.75667) -- (0.5963, 0.75553) -- (0.59634, 0.75439) -- (0.59639, 0.75325) -- (0.59646, 0.75212) -- (0.59653, 0.75099) -- (0.59661, 0.74986) -- (0.59671, 0.74873) -- (0.59681, 0.74761) -- (0.59693, 0.74649) -- (0.59706, 0.74537) -- (0.5972, 0.74426) -- (0.59735, 0.74316) -- (0.59751, 0.74205) -- (0.59751, 0.74205) -- (0.59868, 0.73448) -- (0.59984, 0.72692) -- (0.60101, 0.71935) -- (0.60218, 0.71178) -- (0.60334, 0.70421) -- (0.60451, 0.69664) -- (0.60567, 0.68907) -- (0.60684, 0.68151) -- (0.608, 0.67394) -- (0.60917, 0.66637) -- (0.61033, 0.6588) -- (0.6115, 0.65123) -- (0.61267, 0.64367) -- (0.61383, 0.6361) -- (0.615, 0.62853) -- (0.61616, 0.62096) -- (0.61733, 0.61339) -- (0.61849, 0.60582) -- (0.61966, 0.59826) -- (0.62083, 0.59069) -- (0.62199, 0.58312) -- (0.62316, 0.57555) -- (0.62432, 0.56798) -- (0.62549, 0.56042) -- (0.62665, 0.55285) -- (0.62782, 0.54528) -- (0.62899, 0.53771) -- (0.63015, 0.53014) -- (0.63132, 0.52257) -- (0.63248, 0.51501) -- (0.63365, 0.50744) -- (0.63481, 0.49987) -- (0.63598, 0.4923) -- (0.63715, 0.48473) -- (0.63831, 0.47716) -- (0.63948, 0.4696) -- (0.64064, 0.46203) -- (0.64181, 0.45446) -- (0.64297, 0.44689) -- (0.64414, 0.43932) -- (0.6453, 0.43176) -- (0.64647, 0.42419) -- (0.64764, 0.41662) -- (0.6488, 0.40905) -- (0.64997, 0.40148) -- (0.65113, 0.39391) -- (0.6523, 0.38635) -- (0.65346, 0.37878) -- (0.65463, 0.37121) -- (0.6558, 0.36364) -- (0.6558, 0.36364) -- (0.66006, 0.34065) -- (0.66515, 0.32018) -- (0.67087, 0.30176) -- (0.67705, 0.28504) -- (0.6836, 0.26975) -- (0.69042, 0.25569) -- (0.69745, 0.24267) -- (0.70464, 0.23056) -- (0.71195, 0.21926) -- (0.71935, 0.20865) -- (0.72681, 0.19867) -- (0.73432, 0.18924) -- (0.74185, 0.18031) -- (0.74939, 0.17184) -- (0.75694, 0.16377) -- (0.76449, 0.15607) -- (0.77202, 0.14871) -- (0.77953, 0.14166) -- (0.78701, 0.13489) -- (0.79447, 0.12839) -- (0.8019, 0.12213) -- (0.8093, 0.1161) -- (0.81666, 0.11027) -- (0.82398, 0.10464) -- (0.83126, 0.09919) -- (0.8385, 0.09391) -- (0.8457, 0.0888) -- (0.85286, 0.08383) -- (0.85998, 0.079) -- (0.86706, 0.07431) -- (0.87409, 0.06974) -- (0.88108, 0.06529) -- (0.88803, 0.06095) -- (0.89494, 0.05672) -- (0.90181, 0.05258) -- (0.90863, 0.04855) -- (0.91541, 0.04461) -- (0.92215, 0.04075) -- (0.92885, 0.03697) -- (0.93552, 0.03328) -- (0.94214, 0.02966) -- (0.94872, 0.02611) -- (0.95526, 0.02263) -- (0.96176, 0.01922) -- (0.96823, 0.01587) -- (0.97466, 0.01259) -- (0.98105, 0.00936) -- (0.9874, 0.00618) -- (0.99372, 0.00307) -- (1.0, 0.0) -- (1.0, 0.0) -- (0.99348, 0.00806) -- (0.98692, 0.01625) -- (0.98031, 0.02456) -- (0.97366, 0.03301) -- (0.96697, 0.04159) -- (0.96023, 0.05032) -- (0.95344, 0.05919) -- (0.94661, 0.06822) -- (0.93973, 0.07741) -- (0.9328, 0.08677) -- (0.92582, 0.0963) -- (0.91879, 0.10601) -- (0.91171, 0.11591) -- (0.90457, 0.12601) -- (0.89738, 0.13631) -- (0.89013, 0.14683) -- (0.88283, 0.15757) -- (0.87547, 0.16854) -- (0.86805, 0.17976) -- (0.86056, 0.19124) -- (0.85302, 0.20299) -- (0.84541, 0.21502) -- (0.83774, 0.22735) -- (0.83, 0.24) -- (0.82219, 0.25298) -- (0.81431, 0.26632) -- (0.80637, 0.28002) -- (0.79834, 0.29413) -- (0.79025, 0.30865) -- (0.78207, 0.32362) -- (0.77382, 0.33906) -- (0.76549, 0.35501) -- (0.75707, 0.3715) -- (0.74857, 0.38857) -- (0.73999, 0.40627) -- (0.73131, 0.42463) -- (0.72255, 0.44372) -- (0.71369, 0.4636) -- (0.70474, 0.48432) -- (0.6957, 0.50596) -- (0.68656, 0.52862) -- (0.67732, 0.55238) -- (0.66799, 0.57736) -- (0.65855, 0.60368) -- (0.64902, 0.63148) -- (0.63939, 0.66095) -- (0.62967, 0.69227) -- (0.61986, 0.72569) -- (0.60997, 0.76149) -- (0.6, 0.8);

\shadedraw[draw=blue,left color=gray!30, right color=gray!60] (0.70711, 0.70711) -- (0.70009, 0.70493) -- (0.69356, 0.70307) -- (0.68748, 0.7015) -- (0.6818, 0.70018) -- (0.67649, 0.69909) -- (0.67152, 0.69822) -- (0.66686, 0.69754) -- (0.66249, 0.69703) -- (0.65839, 0.69669) -- (0.65454, 0.6965) -- (0.65091, 0.69644) -- (0.6475, 0.69651) -- (0.64428, 0.6967) -- (0.64125, 0.69699) -- (0.63839, 0.69739) -- (0.6357, 0.69787) -- (0.63315, 0.69845) -- (0.63075, 0.6991) -- (0.62849, 0.69982) -- (0.62635, 0.70062) -- (0.62432, 0.70148) -- (0.62241, 0.7024) -- (0.62061, 0.70337) -- (0.6189, 0.7044) -- (0.61729, 0.70548) -- (0.61577, 0.7066) -- (0.61434, 0.70777) -- (0.61298, 0.70898) -- (0.6117, 0.71022) -- (0.6105, 0.7115) -- (0.60936, 0.71281) -- (0.60829, 0.71416) -- (0.60729, 0.71553) -- (0.60634, 0.71693) -- (0.60545, 0.71836) -- (0.60462, 0.71981) -- (0.60383, 0.72129) -- (0.6031, 0.72279) -- (0.60241, 0.7243) -- (0.60177, 0.72584) -- (0.60118, 0.7274) -- (0.60062, 0.72897) -- (0.60011, 0.73056) -- (0.59964, 0.73216) -- (0.5992, 0.73378) -- (0.5988, 0.73541) -- (0.59843, 0.73705) -- (0.59809, 0.73871) -- (0.59779, 0.74037) -- (0.59751, 0.74205) -- (0.59751, 0.74205) -- (0.59868, 0.73448) -- (0.59984, 0.72692) -- (0.60101, 0.71935) -- (0.60218, 0.71178) -- (0.60334, 0.70421) -- (0.60451, 0.69664) -- (0.60567, 0.68907) -- (0.60684, 0.68151) -- (0.608, 0.67394) -- (0.60917, 0.66637) -- (0.61033, 0.6588) -- (0.6115, 0.65123) -- (0.61267, 0.64367) -- (0.61383, 0.6361) -- (0.615, 0.62853) -- (0.61616, 0.62096) -- (0.61733, 0.61339) -- (0.61849, 0.60582) -- (0.61966, 0.59826) -- (0.62083, 0.59069) -- (0.62199, 0.58312) -- (0.62316, 0.57555) -- (0.62432, 0.56798) -- (0.62549, 0.56042) -- (0.62665, 0.55285) -- (0.62782, 0.54528) -- (0.62899, 0.53771) -- (0.63015, 0.53014) -- (0.63132, 0.52257) -- (0.63248, 0.51501) -- (0.63365, 0.50744) -- (0.63481, 0.49987) -- (0.63598, 0.4923) -- (0.63715, 0.48473) -- (0.63831, 0.47716) -- (0.63948, 0.4696) -- (0.64064, 0.46203) -- (0.64181, 0.45446) -- (0.64297, 0.44689) -- (0.64414, 0.43932) -- (0.6453, 0.43176) -- (0.64647, 0.42419) -- (0.64764, 0.41662) -- (0.6488, 0.40905) -- (0.64997, 0.40148) -- (0.65113, 0.39391) -- (0.6523, 0.38635) -- (0.65346, 0.37878) -- (0.65463, 0.37121) -- (0.6558, 0.36364) -- (0.6558, 0.36364) -- (0.65528, 0.36707) -- (0.65479, 0.37057) -- (0.65433, 0.37412) -- (0.65388, 0.37775) -- (0.65346, 0.38144) -- (0.65307, 0.38521) -- (0.65271, 0.38904) -- (0.65238, 0.39296) -- (0.65207, 0.39695) -- (0.6518, 0.40103) -- (0.65156, 0.40519) -- (0.65136, 0.40944) -- (0.6512, 0.41378) -- (0.65107, 0.41822) -- (0.65098, 0.42276) -- (0.65094, 0.4274) -- (0.65094, 0.43214) -- (0.65098, 0.437) -- (0.65108, 0.44198) -- (0.65123, 0.44708) -- (0.65143, 0.4523) -- (0.65169, 0.45766) -- (0.65201, 0.46316) -- (0.6524, 0.46881) -- (0.65285, 0.4746) -- (0.65338, 0.48056) -- (0.65398, 0.48669) -- (0.65467, 0.49299) -- (0.65543, 0.49948) -- (0.65629, 0.50617) -- (0.65725, 0.51306) -- (0.65831, 0.52017) -- (0.65948, 0.52751) -- (0.66077, 0.53509) -- (0.66218, 0.54294) -- (0.66373, 0.55105) -- (0.66542, 0.55946) -- (0.66726, 0.56818) -- (0.66927, 0.57723) -- (0.67146, 0.58664) -- (0.67384, 0.59642) -- (0.67642, 0.60661) -- (0.67923, 0.61723) -- (0.68229, 0.62832) -- (0.6856, 0.63992) -- (0.68921, 0.65206) -- (0.69313, 0.66479) -- (0.6974, 0.67817) -- (0.70204, 0.69226) -- (0.70711, 0.70711);
\end{scope}
\end{tikzpicture}
\end{center}

\caption{(Left) When $\lambda_1 = \sqrt{3}+\sqrt{3}i$ and $\lambda_3 = 10$, the shaded regions show the values of $\lambda_2$ where $\NNR(\operatorname{diag}(\lambda_1,\lambda_2,\lambda_3))$ has a flat portion. In particular, $\lambda_2 = 3+4i$ is in the shaded region. (Right) $\NNR(A)$ for $A = \operatorname{diag}(\sqrt{3}+i\sqrt{3},3+4i,10)$. Note the flat portion of the boundary connecting the hyperbolic arcs $H(\sqrt{3}+i\sqrt{3},10)$ and $H(3+4i,10)$.}
\label{fig:circles}
\end{figure}
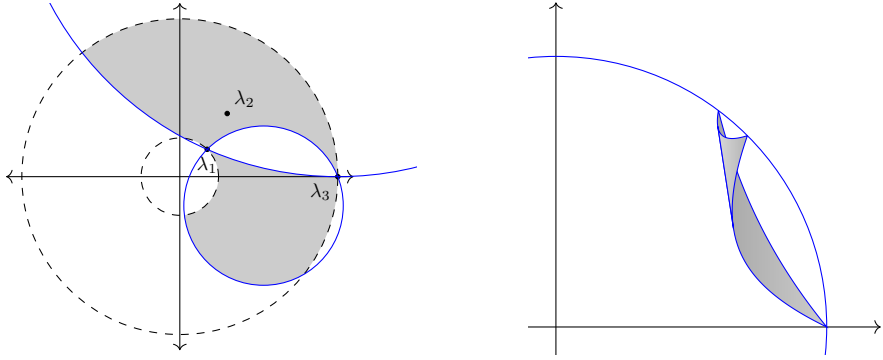

\begin{example}
Consider the matrix
$$A = \begin{bmatrix}
\sqrt{3}+i \sqrt{3}  & 0 & 0 \\ 0 & 3+4i & 0 \\ 0 & 0 & 10
\end{bmatrix}.$$
The normalized numerical range of this matrix appeared in \cite[Figure 2]{Gev04}.  It is not obvious from the figure there, but can be verified using \eqref{eq:flatcondition} that $\NNR(A)$ has a flat portion on its boundary (see Figure \ref{fig:circles}).

The condition in \eqref{eq:flatcondition} can be interpreted geometrically.  Suppose that $\lambda_1$ and $\lambda_3$ are fixed, while $\lambda_2$ is allowed to vary, as long as $|\lambda_1| \le |\lambda_2| \le |\lambda_3|$.  When the right and left-hand inequalities in \eqref{eq:flatcondition} are replaced with equalities, we get equations for two different circles in the complex plane, both passing through $\lambda_1$ and $\lambda_3$.  The set of values of $\lambda_2$ where the normalized numerical range of the diagonal matrix $\diag(\lambda_1,\lambda_2,\lambda_3)$ has a flat portion (not corresponding to a hyperbolic arc $H(\lambda_i,\lambda_j)$) is given by those $\lambda_2$ that are contained in one, but not both, of these circles. For example, in the matrix $A$ above, $\lambda_1 = \sqrt{3}+i\sqrt{3}$, $\lambda_2 = 3+4i$, and $\lambda_3 = 10$.  The eigenvalue $\lambda_2$ falls inside one of the two circles but not the other, as shown in Figure \ref{fig:circles}.
\end{example}

\section*{\refname}
\nocite{Gev06}
\bibliography{../master}

\begin{thebibliography}{10}

\bibitem{AuT}
Y.~H. Au-Yeung and N.~K. Tsing.
\newblock An extension of the {H}ausdorff-{T}oeplitz theorem on the numerical
  range.
\newblock {\em Proc. Amer. Math. Soc.}, 89:215--218, 1983.

\bibitem{Auz}
W.~Auzinger.
\newblock Sectorial operators and normalized numerical range.
\newblock {\em Appl. Numer. Math.}, 45(4):367--388, 2003.

\bibitem{Da1}
C.~Davis.
\newblock The shell of a {H}ilbert-space operator.
\newblock {\em Acta Sci. Math. (Szeged)}, 29:69--86, 1968.

\bibitem{Dav71}
C.~Davis.
\newblock The {T}oeplitz-{H}ausdorff theorem explained.
\newblock {\em Canad. Math. Bull.}, 14:245--246, 1971.

\bibitem{Fischer}
Gerd Fischer.
\newblock {\em Plane algebraic curves}.
\newblock American Mathematical Society, Providence, RI, 2001.
\newblock Translated from the 1994 German original by Leslie Kay.

\bibitem{Gev04}
L.~Z. Gevorgyan.
\newblock On the convergence rate of iterations and the normalized numerical
  range of an operator.
\newblock {\em Math. Sci. Res. J.}, 8(1):16--26, 2004.

\bibitem{Gev06}
L.~Z. Gevorgyan.
\newblock On some properties of the normalized numerical range.
\newblock {\em Izv. Nats. Akad. Nauk Armenii Mat.}, 41(1):41--48, 2006.

\bibitem{Gev091}
L.~Z. Gevorgyan.
\newblock An example of the normalized numerical range.
\newblock {\em Armenian J. Math.}, 1(1):50--53, 2009.

\bibitem{Gev09}
L.~Z. Gevorgyan.
\newblock Normalized numerical ranges of some operators.
\newblock {\em Operators and Matrices}, 3(1):145--153, 2009.

\bibitem{Gev11}
L.~Z. Gevorgyan.
\newblock Normalized numerical ranges of some complex {$2\times 2$} matrices.
\newblock {\em Izv. Nats. Akad. Nauk Armenii Mat.}, 46(5):41--52, 2011.

\bibitem{GJK04}
E.~Gutkin, E.~A. Jonckheere, and M.~Karow.
\newblock Convexity of the joint numerical range: topological and differential
  geometric viewpoints.
\newblock {\em Linear Algebra Appl.}, 376:143--171, 2004.

\bibitem{Hau}
F.~Hausdorff.
\newblock Der {W}ertvorrat einer {B}ilinearform.
\newblock {\em Math. Z.}, 3:314--316, 1919.

\bibitem{HelSpit}
J.~W. Helton and I.~M. Spitkovsky.
\newblock The possible shapes of numerical ranges.
\newblock {\em Operators and Matrices}, 6:607--611, 2012.

\bibitem{Ki}
R.~Kippenhahn.
\newblock {\"{U}}ber den {W}ertevorrat einer {M}atrix.
\newblock {\em Math. Nachr.}, 6:193--228, 1951.

\bibitem{Ki08}
R.~Kippenhahn.
\newblock On the numerical range of a matrix.
\newblock {\em Linear Multilinear Algebra}, 56(1-2):185--225, 2008.
\newblock Translated from the German by Paul F. Zachlin and Michiel E.
  Hochstenbach.

\bibitem{LiPoonSze08}
C.-K. Li, Y.-T. Poon, and N.-S. Sze.
\newblock Davis-{W}ielandt shells of operators.
\newblock {\em Operators and Matrices}, 2(3):341--355, 2008.

\bibitem{Rock97}
R.~T. Rockafellar.
\newblock {\em Convex analysis}.
\newblock Princeton Landmarks in Mathematics. Princeton University Press,
  Princeton, NJ, 1997.
\newblock Reprint of the 1970 original, Princeton Paperbacks.

\bibitem{SpiSto}
I.~M. Spitkovsky and A.-F. Stoica.
\newblock On the normalized numerical range.
\newblock {\em Operators and Matrices}, 11(1):219--240, 2017.

\bibitem{Toe18}
O.~Toeplitz.
\newblock Das algebraische {A}nalogon zu einem {S}atze von {F}ej\'er.
\newblock {\em Math. Z.}, 2:187--197, 1918.

\bibitem{Wiel}
H.~Wielandt.
\newblock On eigenvalues of sums of normal matrices.
\newblock {\em Pacific J. Math.}, 5:633--638, 1955.

\end{thebibliography}
\bibliographystyle{plain}

\end{document}